\documentclass[preprint,11pt]{article}
\usepackage[pagewise]{lineno}
\usepackage{amssymb}
\usepackage{latexsym,amsfonts,amssymb,amsmath,amsthm}
\usepackage{graphicx}
\usepackage{cite}
\usepackage{amsthm}
\usepackage{amsfonts}
\usepackage{xr}
\usepackage{graphicx}
\usepackage{amsmath}
\usepackage{color}
\usepackage{amssymb}
\usepackage{mdwlist}

\pagenumbering{arabic}

\newtheorem{theorem}{Theorem}[section]
\newtheorem{definition}{Definition}[section]

\newtheorem{lemma}[theorem]{Lemma}
\newtheorem{rem}[theorem]{Remark}

\numberwithin{equation}{section}

\parindent 0.5cm
\evensidemargin 0cm \oddsidemargin 0cm \topmargin 0cm \textheight
22cm \textwidth 16cm \footskip 2cm \headsep 0cm

  \def \G{\Gamma}

\makeatletter % `@' now normal "letter"
\@addtoreset{equation}{section}
\makeatother  % `@' is restored as "non-letter"

\newcommand\norm[1]{\lVert#1\rVert}
\newcommand\abs[1]{\lvert#1\rvert}

\newcommand\RR{\ensuremath{\mathbb{R}}}

\def\tint{\text{Int}}
\newcommand{\ucite}[1]{\cite{#1}}

\begin{document}

\title{Semiflows strongly focusing monotone with respect to high-rank cones: II. Pseudo-ordered principle}

\author{Lirui Feng\thanks{School of Mathematical Sciences, University of Science and Technology of China, Hefei, Anhui, 230026, People’s Republic of China (ruilif@ustc.edu.cn). The author is supported by NSF of China No.12101583 and No.12331006.}}
\date{}

\maketitle

\begin{abstract}
We consider a semiflow strongly focusing monotone with respect to a cone of rank $k$ on a Banach space. We prove that the omega-limit set of a pseudo-ordered semiorbit is ordered, which is called as pseudo-ordered principle. Based on this principle, we obtain the solid Poincar\'{e}-Bendixson theorem with the rank $k=2$, that is, the omega-limit set of a pseudo-ordered semiorbit is either a nontrivial periodic orbit or a set consisting of equilibria with their potential connected orbits. The generic solid dynamics theorem with a general rank $k$ and the generic solid Poincar\'{e}-Bendixson theorem with the rank $k=2$ are also obtained.
\end{abstract}

\section{Introduction}

In this paper, we investigate the total-ordering property of the omega-limit set of a pseudo-ordered semiorbit for the semiflow $\Phi_t$ strongly focusing monotone with respect to a cone $C$ of rank $k$ on an infitnite dimensional Banach space $X$, and improve some related properties of its. A cone $C$ of rank $k$ (abbr. $k$-cone) is a closed subset of $X$ such that $\mathbb{R}\cdot C\subset C$ and it contains a subspace of dimension $k$ but no subspace of higher dimension, which is introduced by Krasnosel'skij, Lifshits and Sobolev \cite{K-L-S}, and also by Fusco and Oliva \cite{F-O-1,F-O-2} for the case of finite dimensional spaces. Roughly speaking, a semiflow $\Phi_t$ strongly focusing monotone with respect to a cone $C$ of rank $k$, is a smooth semiflow strongly monotone with respect to $C$ possessing the strongly focusing monotonicity (see Definition \ref{strongly focusing monotone}(ii)), that provides clues on local behaviors around invaraint sets of the semiflow by strongly focusing operators. We refer to the series of wroks in \cite{F-W-W1,F-W-W2,F-W-W3,F-W-W4,F} for the strong monotonicity with respect to $k$-cones, and the series of poineering works in \cite{Hir-1,Hir-2,Hir-3,Hir-4,Hir-5,Hir-6,Hir-7} for the strong monotonicity with respect to convex cones. A strongly focusing operator with respect to a $k$-cone originated from Krasnosel'skij et al. \cite{K-L-S} to prove a Krein-Rutman type theorem with respect to a k-cone for a single operator, and also from Lian and Wang \cite{LW2} to investigate the relationship between Multiplicative Ergodic theorem and Krein-Rutman type theorem for random linear dynamical systems. The notable feature of a strongly focusing operator $R$ with respect to $C$ is that the operator $R$ maps $C\setminus\{0\}$ into the interior of $C$ such that unit vectors in $R C$ are uniformly separated from the boundary of $C$. This feature also makes a semiflow strongly focusing monotone with respect to a $k$-cone on an infinite dimensional Banach space be a kind of natrual extension of a flow strongly monotone with respect to a $k$-cone on $\mathbb{R}^n$ (See flows in \cite{F-W-W2}). 

The research on total-ordering property of the entire omega-limit (abbr. $\omega$-limit) set of a pseudo-ordered orbit of a flow strongly monotone with respect to a $k$-cone is proposed by Sanchez via an open problem \cite[p.1984]{San09}. It is hereafter referred to as 

\noindent{\bf Pseudo-ordered principle:}  The $\omega$-limit set of a precompact pseudo-ordered semiorbit is ordered.
 
\noindent Here, $x\in X$ and $y\in X$ are called {\it ordered} if $x-y\in C$, denoted by $x\thicksim y$; otherwise, they are called {\it unordered}, denoted by $x\rightharpoondown y$. Moreover, a subset $B\subset X$ is called {\it ordered} if $x\thicksim y$ for any $x,y\in B$; and is called {\it unordered} if it is not a singleton and $x\rightharpoondown y$ for any distinct $x,y\in B$. In Sanchez's works, he used the $C^1$-closed lemma to prove that each orbit in the $\omega$-limit set of a pseudo-ordered orbit is ordered (see \cite[Theorem 2]{San09}). Feng, Wang, and Wu extended this result to a semiflow strongly monotone with respect to a $k$-cone $C$ on an infinite dimensional Banach space $X$ without the smoothness assumption (see \cite[Theorem A]{F-W-W1}). By creative utilizing the topological properties of continuous semiflows, they obtained the order-trichotomy of the $\omega$-limit set $\Omega$ of a pseudo-ordered semiorbit $O^+(x)$ (see \cite[Theorem B]{F-W-W1} and also Lemma \ref{ordered trichotomy}), that is, either,

\begin{itemize}
\item[(a)] $\Omega$ is ordered; or, 
\item[(b)] $\Omega$ is an unordered set consisting of equilibria; or otherwise, 
\item[(c)] $\Omega$ possesses a certain ordered homoclinic property. 
\end{itemize}

\noindent For a flow strongly monotone with respect to a $k$-cone $C$ on $\mathbb{R}^n$, Feng, Wang, and Wu improved the order-trichotomy, and prove the pseudo-ordered principle by a series of technical lemmas based on the smoothness of the flow and the compactness of the closed unit ball in $\mathbb{R}^n$ (see \cite[Theorem A]{F-W-W4}). However, the closed unit ball in an infinite dimensional Banach space is not compact, and the inverse of the $x$-derivative $D_x\Phi_t$ of the semiflow $\Phi_t$ may not exist for each $(x,t)\in X\times \mathbb{R}^+.$ Therefore, we must conquer many difficulties to investigate the total-ordering property of the $\omega$-limit set $\Omega$ of a pseudo-ordered semiorbit $O^+(x)$ for a semiflow $\Phi_t$ strongly focusing monotone with respect to a $k$-cone $C$ on a Banach space $X$. 

In order to prove the pseudo-ordered principle for the semiflow $\Phi_t$, the idea is to prove that the alternatives (b) and (c) in the order-trichotomy for the $\omega$-limit set $\Omega$ of a pesudo-ordered semiorbit don't occur. The most important tool is the $k$-exponential separation of the linear skew-product semiflow $(\Phi_t, D\Phi_t)$ on $\Omega\times X$, which ensures that $(\Phi_t, D\Phi_t)$ on $\Omega\times X$ possesses an (unique) invariant bundle decomposition $\Omega\times X=\Omega\times (E_y)\oplus\Omega\times (F_y)$ such that the action of $(\Phi_t, D\Phi_t)$ on $\Omega\times (E_y)$ dominates the one on $\Omega\times (F_y)$. 

A notable feature in alternatives (b) and (c) is that there exists an equilibrium $p$ in $\Omega$. This inspires us to use the $k$-exponential separation to obtain the growth exponent of the actions of $D_p\Phi_t$ on its invariant subspaces $E_p$ and $F_p$ respectively. In the general case of infinite dimensional Banach spaces, our approach to realize the aim (see Lemma \ref{invariant subspace}) is analyzing the spectrum of $D_p\Phi_t$ on $E_p$ and $F_p$, which is instead of utilizing the perron theorem (see \cite[Theorem 1]{F-O-2}) in the case of finite dimensional spaces. 

To overcome the difficulty caused by the nonexistence of the inverse of $D_y\Phi_t$ for $(y,t)\in X\times \mathbb{R}^+$, we make new estimates on the action of $D_p\Phi_t, (p,t)\in X\times \mathbb{R}^+ $ to obtain that the sign of the growth exponent of the action of $D_p\Phi_t$ on $E_p$ is positive with the assumption that one of the alternatives (b) and (c) holds. 

Compared with the flows on a finite dimensional space, the lack of compactness of the closed unit ball in an infinite dimensional Banach space results that the positive sign of the growth exponent of the action of $D_{\tilde{p}}\Phi_t$ on $E_{\tilde{p}}$ for a general equilibrium ${\tilde{p}}$, can not imply the behavior of the semiorbits separating away a certain small neighborhood $\mathcal{V}_{\tilde{p}}$ of $\tilde{p}$ with its initial point  in $\mathcal{V}_{\tilde{p}}$, ordered and different with ${\tilde{p}}$. For convenience, this behavior is called as {\it the separating behavior}. Even if the strongly focusing monotonicity of the semiflow $\Phi_t$ brings some useful information, we can only confirm that the separating behavior is local and potentially temporary with certain separating estimates (see Lemma \ref{p-notin-omega.y}). To deal with the local and potentially temporary characteristic of this separating behavior for the semiflow $\Phi_t$ on an infinite dimensional Banach space, we combinate the strongly focusing monotonicity of $\Phi_t$ and the compactness of the $\omega$-limit set $\Omega$ to do delicate arguments for the special equilibrium $q$ in $\Omega$, and prove that the alternative (c) does not occur (see Lemma \ref{Lemma for iii}). The existence of a positive lower bound of limit superior of the distance between two certain points in semiorbits with different initial points (see Lemma \ref{L:instabilty}) are used in the arguments to prove that the alternative (b) does not occur (see Lemma \ref{Lemma for ii}). Together with Lemma \ref{Lemma for ii} and \ref{Lemma for iii}, we prove the pseudo-ordered principle for the semiflow $\Phi_t$. 

It should be pointed out that the arguments on the solid Poincar\'{e}-Bendixson theorem for flows on a finite dimensional space (see \cite[Theorem B]{F-W-W4}) are still effective for the semiflow $\Phi_t$ strongly focusing monotone with respect to a $k$-cone $C$ on an infinite dimensional Banach space. We also improve the generic dynamics of the semiflow $\Phi_t$ (see \cite[Theorem A]{F}) and obtain the generic solid dynamics theorem. The generic solid Poincar\'{e}-Bendixson theorem for the semiflow $\Phi_t$ is obtained when the rank of $C$ is $k=2$.

This paper is organized as follows. In section 2, we introduce notations and main results. In section 3, we present and prove preliminary lemmas. In section 4, we prove our main results.

\section{Notations and Main results}

We start with basic notations and definitions on semiflows strongly focusing monotone with respect to high-rank cones. Let $(X,\norm{\cdot})$ be a Banach space equipped with a norm $\norm{\cdot}$. A {\it semiflow} on $X$ is a continuous map $\Phi:\mathbb{R}^+\times X\to X$ with $\Phi_0={\rm Id}$ and $\Phi_t\circ\Phi_s=\Phi_{t+s}$ for $t,s\ge 0$. Here, $\Phi_t(\cdot)=\Phi(t,\cdot)$ for $t\in\mathbb{R}^+$, and ${\rm Id}$ is the identity map on $X$. A semiflow $\Phi_t$ on $X$ is called {\it $C^{1,\alpha}$-smooth} if $\Phi|_{\mathbb{R}^+\times X}$ is a $C^{1,\alpha}$-map (a $C^1$-map with a locally $\alpha$-H\"{o}lder derivative) with $\alpha\in (0,1]$. The derivative of $\Phi_t$ with respect to $x$, at $(t,x)$, is denoted by $D_x\Phi_t$.

Let $x\in X$,  the {\it positive semiorbit of $x$} is denoted by $O^+(x)=\{\Phi_t(x):t\ge 0\}$. A {\it negative semiorbit}
(resp. {\it full-orbit}) of $x$ is a continuous function $\psi:\mathbb{R}^-=\{t\in \mathbb{R}: t\le 0\}\to X$ (resp. $\psi:\mathbb{R}\to X$) such that $\psi(0)=x$, and for any $s\le 0$ (resp. $s\in \mathbb{R}$), $\Phi_t(\psi(s))=\psi(t+s)$ holds for $0\le t\le -s$ (resp. $0\le t$). Clearly, if $\psi$ is a negative semiorbit of $x$, then $\psi$ can be extended to a full-orbit, that is, $\tilde{\psi}(t)=\psi(t)$ for $t\leq 0$, and $\tilde{\psi}(t)=\Phi_t(x)$ for any $t\geq 0$. On the other hand, any full-orbit of $x$ when restricted on $\mathbb{R}^-$ is a negative semiorbit of $x$. Since $\Phi_t$ is just a semiflow, a negative semiorbit of $x$ may not exist, and it is not necessary to be unique even if one exists.

An {\it equilibrium} (also called {\it a trivial orbit}) is a point $x$ for which $O^+(x)=\{x\}$. Let $E$ be the set of all equilibria of $\Phi_t$. A nontrivial semiorbit $O^+(x)$ is said to be a {\it periodic orbit} if $\Phi_T(x)=x$ for some $T>0$; and it is said to be a {\it $T$-periodic orbit} if there is a $T>0$ such that $\Phi_T(x)=x$ and $\Phi_t(x)\neq x$ for any $t\in(0,T)$, where $T$ is called the {\it minimal period} of $O^+(x)$. A subset $\Sigma\subset X$ is called {\it positively invariant with respect to $\Phi_t$ {\rm (}for short, positively invariant{\rm)}} if $\Phi_{t}(\Sigma)\subset \Sigma$ for any $t\in \mathbb{R}^+$; and it is called {\it invariant} if $\Phi_{t}(\Sigma)=\Sigma$ for any $t\in \mathbb{R}^+$. Clearly, for any $x\in \Sigma$, there exists a negative semiorbit of $x$, provided that $\Sigma$ is invariant. Let $\Sigma\subset X$ be an invariant set. $\Phi_t$ is said to {\it admit a flow extension on $\Sigma$}, if there is a flow $\tilde{\Phi}_t$ such that $\tilde{\Phi}_t(x)=\Phi_t(x)$ for any $x\in \Sigma$ and $t\ge 0.$ The {\it omega-limit {\rm(}abbr. $\omega$-limit{\rm)} set} $\omega(x)$ of $x\in X$ is defined by $\omega(x)=\cap_{s\ge 0}\overline{\cup_{t\ge s}\Phi_t(x)}$. If $O^+(x)$ is precompact, then $\omega(x)$ is nonempty, compact, connected, and invariant. Given a subset $D\subset X$, {\it the positive semiorbit $O^+(D)$ of $D$} is defined as $O^+(D)=\bigcup\limits_{x\in D}O^+(x)$. A subset $D$ is called {\it $\omega$-compact} if $O^+(x)$ is precompact for each $x\in D$, and $\bigcup\limits_{x\in D}\omega(x)$ is precompact. Clearly, $D$ is $\omega$-compact provided by the compactness of $\overline{O^+(D)}$. Given a negative semiorbit $\psi$ of $x$, it is also denoted by $O_{b}^-(x)$ in order to emphasize its negativity, initial point and image in the phase space $X$. If $O^{-}_{b}(x)$ is precompact, the {\it alpha-limit {\rm(}abbr. $\alpha$-limit{\rm)} set} $\alpha_{b}(x)$ of $O^{-}_{b}(x)$ is defined as $\alpha_{b}(x)=\cap_{s\geq 0}\overline{\cup_{t\geq s}\psi(-t)}$. We write $\alpha(x)$ as the unique $\alpha$-limit set of $O^{-}(x)$ if $x$ admits a unique negative semiorbit $O^{-}(x)$. If $O^-(x)$ is precompact, then $\alpha(x)$ is nonempty, compact, connected, and invariant.

A closed set $C\subset X$ is called a cone of rank-$k$ ({\it abbr. $k$-cone}) if \begin{itemize}
\item[{(\rm i)}] For any $v\in C$ and $l\in \RR,$ $lv\in C$; 
\item[{\rm ii)}]  $\max\{\dim W:C\supset W \text{ linear subspace}\}=k.$
\end{itemize}
\noindent Moreover, the integer $k(\ge 1)$ is called {\it the rank of $C$}. A $k$-cone $C\subset X$ is said to be {\it solid} if its interior
$\tint\, C\ne \emptyset$; and $C$ is called {\it $k$-solid} if there is a $k$-dimensional linear subspace $W$ such that
$W\setminus \{0\}\subset \tint C$. Given a $k$-cone $C\subset X$, we say that $C$ is {\it complemented} if there exists a $k$-codimensional subspace $H^{c}\subset X$ such that $H^{c}\cap C=\{0\}$. For two points $x,y\in X$, we call that {\it $x$ and $y$ are ordered}, denoted by $x\thicksim y$, if $x-y\in C$. Otherwise, $x,y$ are called to be {\it unordered}, denoted by $x\rightharpoondown y$. The pair of points $x,y\in X$ are said to be {\it strongly ordered}, denoted by $x\thickapprox y$, if $x-y\in \tint \,C$. A nonempty set $W\subset X$ is called ordered if $x\thicksim y$ for any $x,y\in W$ and it is called (resp. strongly ordered) unordered if it is not a singleton and (resp. $x\thickapprox y$) $x\rightharpoondown y$ for any two distinct points $x,y\in W$. Two sets $A,\,B\subset X$ are said to be {\it ordered}, denoted by $A\thicksim B$, if $x\thicksim y$ for any $x\in A$ and $y\in B$.

The {\it separation index} between set $L_1$ and $L_2$, denoted by $\underline{\text{dist}}(L_1,L_2)$, is defined as 

$$\underline{\text{dist}}(L_1,L_2)=\inf\limits_{v\in L_1, \norm{v}=1}\{\inf\limits_{u\in L_2}\norm{v-u}\}.$$ A linear operator $R\in L(X)$ is called {\it strongly positive with respect to $C$}, if $R\,\big(C\setminus\{0\}\big)\subset \text{Int} C$. It is called {\it strongly focusing with respect to $C$}, if there is a $\kappa>0$ such that $0\notin R(C\setminus \{0\})$ and

$$\underline{\text{dist}}(RC, X\setminus C)=\kappa,$$ where $\kappa$ is called {\it the separation index of $R$}. The strongly focusing operator $R$ is automatically a strongly positive operator. On $\mathbb{R}^n$, the strong positivity of $R$ indicates that $R$ possesses the strongly focusing property. Denote $d(x,y)=\norm{x-y}$ for any $x,y\in X$, and $d(x,B)=\inf\limits_{y\in B}d(x,y)$ for any $x\in X, B\subset X$.

A semiflow $\Phi_t$ on $X$ is called {\it monotone with respect to $C$} if $\Phi_t(x)\thicksim\Phi_t(y)\,\,\text{whenever}\, x\thicksim y\,\, \text{and}\,\,t\ge 0;$ and $\Phi_t$ is called {\it strongly monotone with respect
to $C$} if $\Phi_t$ is monotone with respect to $C$ and $\Phi_t(x)\approx \Phi_t(y)\,\,\text{whenever}\, x\ne y,\,x\thicksim y\,\text{and}\, t>0.$ A nontrivial positive semiorbit $O^+(x)$ is called {\it pseudo-ordered} (also called {\it of Type-I}) if there exist two distinct points
$\Phi_{t_1}(x),\Phi_{t_2}(x)$ in $O^+(x)$ such that $\Phi_{t_1}(x)\thicksim\Phi_{t_2}(x)$. Otherwise, $O^+(x)$ is called {\it unordered}
(also called {\it of Type-II}). Hereafter, we let 

$$Q=\{x\in X: O^+(x) \text{ is pseudo-ordered}\}.$$

Throughout this paper, we assume that $C$ is a complemented $k$-solid cone, and $\Phi_t$ with its compact $x$-derivative $D_x\Phi_t$ ($x\in X, t>0$) admits a flow extension on each $\omega$-limit set.

\begin{definition}\label{strongly focusing monotone} {\rm A semiflow $\Phi_t$ is called {\it strongly focusing monotone with respect to $C$}, if it satisfies:

(i) It is $C^1$-smooth and strongly monotone with respect to $C$ such that the $x$-derivative $D_x\Phi_t$ of $\Phi_t \,(t>0)$ is strongly positive with respect to $C$ for any $x\in X$;

(ii) For each compact invariant set $\Sigma$ with respect to $\Phi_t$, one can find constants $\delta,T,\kappa>0$ such that for any $z\in \Sigma$ and $x, y\in B_{\delta}(z)$, there is a strongly focusing operator $T_{(x,y)}$ with respect to $C$ such that its separation index is greater than $\kappa$, and $T_{(x,y)}(x-y)=\Phi_T(x)-\Phi_T(y)$, where $B_{\delta}(z)=\{v\in X:\norm{v-z}\leq\delta\}$.}
\end{definition}

\begin{rem} {\rm Let $\Phi_t$ be a $C^1$-smooth flow strongly monotone w.r.t. $C$ on $\mathbb{R}^n$, whose $x$-derivative $D_x\Phi_t$ is strongly positive w.r.t. $C$ for any $x\in\mathbb{R}^n$ and $t>0$. Then, $\Phi_t$ is strongly focusing monotone w.r.t. $C$.}
\end{rem}

\begin{rem} {\rm The restriction ``for each compact invariant set $\Sigma$'' in the strongly focusing monotonicity in Definition \ref{strongly focusing monotone}(ii) can be relaxed and becomes ``for each $\omega$-limit set $\omega(x)$'' in the proof of the results in this paper. Specially, in the proof of the pseudo-ordered principle (see Theorem A) and the solid Poincar\'{e}-Bendixson theorem (see Theorem C), the restriction in the strongly focusing monotonicity can be weakened as ``for the subset of the $\omega$-limit set, consisting of equilibiria''.}
\end{rem}

\begin{rem}{\rm Let $\tilde{\Sigma}=\text{Co}\{B_{\delta}(\Sigma)\}\times\text{Co}\{B_{\delta}(\Sigma)\}$, where $B_{\delta}(\Sigma)=\{v\in X:d(v,\Sigma)\leq\delta\}$ and $\text{Co}\{B_{\delta}(\Sigma)\}$ is the convex hull of $B_{\delta}(\Sigma)$. Let $T_{(x,y)}=\int_0^1D_{y+s(x-y)}\Phi_Tds$ for any $(x,y)\in\tilde{\Sigma}$. Then, one has $T_{(x,y)}(x-y)=\Phi_T(x)-\Phi_T(y)$. Let $\kappa>0$. Compared with Definition \ref{strongly focusing monotone}(ii), the following condition has more restriction.\\
($*$)\quad  $T_{(x,y)}$ for any $(x,y)\in \tilde{\Sigma}$ is strongly focusing w.r.t. $C$ such that its separation index is greater than $\kappa$.}
\end{rem}

Let $G(k,X)$ be {\it the Grassmanian of $k$-dimensional linear subspaces of $X$}, which consists of all $k$-dimensional linear subspaces in $X$. $G(k, X)$ is a completed metric space by endowing {\it the gap metric} (see, for example, \cite{Kato,LL}). More precisely, for any nontrivial
closed subspaces $L_1,L_2\subset X$, define that

\begin{equation*}\label{E:GapDistance}%E:GapDistance
 d(L_1,L_2)=\max\left\{\sup_{v\in L_1\cap S}\inf_{u\in L_2\cap S}\norm{v-u}, \sup_{v\in L_2\cap S}\inf_{u\in L_1\cap S}\norm{v-u}\right\},
 \end{equation*} where $S=\{v\in X:\norm{v}=1\}$ is the unit sphere.
For a $k$-solid cone $C\subset X$, we denote by $\G_k(C)$ the set of $k$-dimensional subspaces inside $C$, that is,

$$\G_k(C)=\{L\in G(k,X):\,L\subset C\}.$$
Let $\Sigma\subset X$ be a compact invariant subset of $\Phi_t$. We consider the linear skew-product semiflow $(\Phi_t,\,D\Phi_t)$ on $\Sigma\times X$,
which is defined as $(\Phi_t, D\Phi_t)(x,v)=(\Phi_t(x),D_x\Phi_t v)$ for any $(x,v)\in \Sigma\times X$ and $t>0$. Let $\{E_x\}_{x\in \Sigma}$ be a family of $k$-dimensional subspaces of $X$. We call $\Sigma\times (E_x)$ {\it a $k$-dimensional continuous vector bundle on $X$} if the map $\Sigma\mapsto G(k,\,X): x\mapsto E_x$ is continuous. Let $\{F_x\}_{x\in \Sigma}$ be a family of $k$-codimensional closed vector subspaces of $X$. We call $\Sigma\times (F_x)$ {\it a $k$-codimensional continuous vector bundle on $X$} if there is a $k$-dimensional continuous vector bundle $\Sigma\times (L_x)\subset \Sigma\times X^*$ such that the kernel ${\rm Ker}(L_x)=F_x$ for each $x\in \Sigma$.
Here, $X^*$ is the dual space of $X$.

Let $\Sigma\times (E_x)$ be a $k$-dimensional continuous vector bundle on $X$, and let $\Sigma\times (F_x)$ be a $k$-codimensional continuous vector bundle
on $X$ such that $X=E_x\oplus F_x$ for all  $x\in \Sigma$. We define the {\it family of projections associated with the decomposition}
$X=E_x\oplus F_x$ as $\{\Pi^{E_x}\}_{x\in \Sigma}$ where $\Pi^{E_x}$ is the linear projection of $X$ onto $E_x$ along $F_x$, for each $x\in \Sigma$. Write $\Pi^{F_x}=\text{I}-\Pi^{E_x}$ for each $x\in \Sigma$. Clearly, $\Pi^{F_x}$ is the linear projection of $X$ onto $F_x$ along $E_x$. Moreover, both $\Pi^{E_x}$ and $\Pi^{F_x}$ are continuous with respect to $x\in \Sigma$. We say that the decomposition $X=E_x\oplus F_x$ is
{\it invariant with respect to $(\Phi_t,\,D\Phi_t)$} if $D_x\Phi_tE_x=E_{\Phi_t(x)}$, $D_x\Phi_tF_{x}\subset F_{\Phi_t(x)}$ for each $x\in \Sigma$ and $t\ge 0$.

\begin{definition}\label{D:ES-separation}
{\rm Let $\Sigma\subset X$ be a compact invariant subset for $\Phi_t$. The linear skew-product semiflow $(\Phi_t,\,D\Phi_t)$ admits a
{\bf $k$-exponential separation along $\Sigma$} (for short, $k$-exponential separation), if there are $k$-dimensional continuous vector bundle
$\Sigma\times (E_x)$ and $k$-codimensional continuous vector bundle $\Sigma\times (F_x)$ such that \begin{itemize}

\item[{\rm (i)}] $X=E_x\oplus F_x$ for any $x\in \Sigma$;

\item[{\rm (ii)}] $D_x\Phi_tE_x=E_{\Phi_t(x)}$, $D_x\Phi_tF_{x}\subset F_{\Phi_t(x)}$ for any $x\in \Sigma$ and $t\geq 0$;

\item[{\rm (iii)}]  there are constants $M>0$ and $0<\gamma<1$ such that
\begin{equation}\label{dominated-splitting}\norm{D_x\Phi_tw}\leq M\gamma^{t}\norm{D_x\Phi_tv}\end{equation} for all $x\in \Sigma$, $w\in F_x\cap S$, $v\in E_{x}\cap S$
and $t\ge 0$, where $S=\{v\in X:\norm{v}=1\}$.\end{itemize}

Let $C\subset X$ be a complemented $k$-solid cone. If, in addition,

\begin{itemize}\item[{\rm (iv)}] $E_x\subset {\rm Int}\,C\cup \{0\}$ and $F_x\cap C=\{0\}$ for any $x\in \Sigma$,\end{itemize}

\noindent then $(\Phi_t,\,D\Phi_t)$ is said to admit a {\bf $k$-exponential separation along $\Sigma$ associated with $C$}.}
\end{definition}

Since $E_x$ is $k$ dimensional for any $x\in \Sigma$, one can define {\it the infimum norm $m(D_x\Phi_t|_{_{E_x}})$
of $D_x\Phi_t$ restricted on $E_x$} for each $x\in \Sigma$ and $t\geq 0$ as follows:
\begin{equation}\label{infimum norm}m(D_x\Phi_t|_{_{E_x}})=\inf\limits_{v\in E_x\cap S}\norm{D_x\Phi_tv},\end{equation} where $S=\{v\in X: \norm{v}=1\}$.

\begin{definition}\label{Lyapunov expnents}{\rm For each $x\in \Sigma$, {\it the $k$-Lyapunov exponent} is defined as
\begin{equation}
\lambda_{kx}=\limsup_{t\to +\infty}\dfrac{\log m(D_x\Phi_t|_{_{E_x}})}{t},
\end{equation}} where $\Sigma\subset X$ is a compact invariant subset of $\Phi_t$.
\end{definition} A point $x\in \Sigma$ is called {\it a regular point} if $\lambda_{kx}=\lim\limits_{t\to +\infty}\dfrac{\log m(D_x\Phi_t|_{_{E_x}})}{t}$.

\vskip 3mm
We now state our main results:

{\bf \noindent Theorem A}\label{Order-property} {\bf (Pseudo-Ordered Principle)} Assume that $\Phi_t$ is strongly focusing monotone with respect to the $k$-cone $C$. Let $O^+(x)$ be a precompact pseudo-ordered semiorbit. Then, $\omega(x)$ is ordered, and more, it is topologically conjugate to a compact invariant set of a continuous vector field in $\mathbb{R}^k$.

\begin{rem} {\rm We use the infinite dimensional version of pseudo-ordered principle to extend our previous work \cite[Theorem A]{F-W-W4} on the ``total-ordering property" of the $\omega$-limit set of a pseudo-ordered orbit for the case $X\subset \mathbb{R}^n$, which gives a positive answer of the open problem posed by Sanchez \cite[P.1984]{San09}.  For the class of semiflows strongly focusing monotone w.r.t. $k$-cones, we improve the order-trichotomy (see \cite[Theorem B]{F-W-W1}) of the $\omega$-limit set $\omega(x)$ of a pseudo-ordered semiorbit for continuous semiflows strongly monotone w.r.t. $k$-cones. It should be pointed out that $\omega(x)$ is actually contained in $\mathbb{R}^k$ due to Theorem A.}
\end{rem}

Let 

$$C_{E}=\{x\in X:\,\omega(x)\,\text{consists of a singleton}\}.$$ By virtue of Theorem A, we have the solid version of generic dynamics theorem.

{\bf \noindent Theorem B}\label{Generic Dynamics} {\bf (Generic Solid Dynamics Theorem)} Assume that $\Phi_t$ is a $C^{1,\alpha}$-smooth semiflow strongly focusing monotone with respect to the $k$-cone $C$. Let $\mathcal{D}\subset X$ be an open bounded set such that $O^+(\mathcal{D})$ is precompact. Then, $\text{Int}(Q\cup C_E)$ (interior in $X$) is dense in $\mathcal{D}$, and $\omega(x)$ is ordered for any $x\in \text{Int}(Q\cup C_E)$.

Based on Theorem A, we obtain the infinite dimensional solid version of Poincar\'{e}-Bendixson theorem for $2$-cones.

{\bf \noindent Theorem C}\label{poincare-bendixson-2} {\bf (Solid Poincar\'{e}-Bendixson Theorem)} Assume that $\Phi_t$ is strongly focusing monotone with respect to the $k$-cone $C$ with $k=2$. Let $O^+(x)$ be a precompact pseudo-ordered semiorbit. Then, the $\omega$-limit set $\omega(x)$ is either a nontrivial periodic orbit or a set consisting of equilibria with their potential connected orbits.

Together with Theorem B and C, we have the following generic solid Poincar\'{e}-Bendixson theorem.

{\bf \noindent Theorem D}\label{Generic Poincare} {\bf (Generic Solid Poincar\'{e}-Bendixson Theorem)} Assume that $\Phi_t$ is a $C^{1,\alpha}$-smooth semiflow strongly focusing monotone with respect to the $k$-cone $C$ with $k=2$. Let $\mathcal{D}\subset X$ be an open bounded set such that $O^+(\mathcal{D})$ is precompact. Then there exists an open dense subset $ \mathcal{D}^{'}\subset \mathcal{D}$ such that for any $x\in \mathcal{D}^{'}$, $\omega(x)$ is either a nontrivial periodic orbit or a set consisting of equilibria with their potential connected orbits.

\begin{rem} {\rm Consider the cone $C$ with its rank being $k=2$. Based on Theorem A, we improve the Poincar\'{e}-Bendixson theorem\cite[Theorem C]{F-W-W1}, which concludes that the $\omega$-limit set $\omega(x)$ of a pseudo-ordered semiorbit $O^+(x)$ containing no equilibrium is a periodic orbit. By Theorem C, we make a whole description on the ordering structure of $\omega(x)$, and extend \cite[Theorem B]{F-W-W4} to the case of infinite dimensional spaces. In Theorem D, we show that $\omega$-limit sets are generically contained in 2-dimensional manifolds.}
\end{rem}

\section{Preliminary Results}\label{S:PR}
Before proving our main theorems, we present in this section several lemmas, some of which were obtained in our previous studies \ucite{F,F-W-W1,F-W-W2,F-W-W3,F-W-W4}, and are refined here. The first lemma is on the order-trichotomy of the $\omega$-limit set of a pseudo-ordered semiorbit.

\begin{lemma}\label{ordered trichotomy} Let $\Phi_t$ be a semiflow strongly monotone with respect to $C$. If $x\in Q$ such that $O^+(x)$ is precompact, then the closure of any full-orbit in $\omega(x)$ is ordered. Moreover, one of the following alternatives must occur: either,

\begin{itemize}
\item[{\rm (a)}] $\omega(x)$ is ordered; or,
\item[{\rm (b)}] $\omega(x)\subset E$ is unordered; or otherwise,
\item[{\rm (c)}] there is an ordered invariant subset $\tilde{B}\subsetneq \omega(x)$ such that $\tilde{B}\thicksim \omega(x)$ and for any $z\in \omega(x)\setminus\tilde{B}$,

$$\alpha(z)\cup\omega(z)\subset \tilde{B} \,\,\text{ and }\,\,\alpha(z)\subset E.$$
\end{itemize}
\end{lemma}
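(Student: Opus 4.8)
\textbf{Proof proposal for Lemma \ref{ordered trichotomy}.}

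The plan is to combine two ingredients: the fact that the closure of any full-orbit in $\omega(x)$ is ordered (which is the main structural input, and which I would first establish using the pseudo-orderedness of $O^+(x)$ together with monotonicity and a limiting argument in the compact set $\omega(x)$), and a topological dichotomy on how the ordered full-orbit closures can ``fit together'' inside the connected invariant set $\omega(x)$. The key point for the first ingredient is that if $\Phi_{t_1}(x) \thicksim \Phi_{t_2}(x)$ with $t_1 < t_2$, then monotonicity propagates this along the semiorbit: $\Phi_{t_1+s}(x) \thicksim \Phi_{t_2+s}(x)$ for all $s \geq 0$, so after passing to $\omega$-limits one gets comparability relations among points of $\omega(x)$; a careful bookkeeping of which points are comparable, using that $\Phi_t$ admits a flow extension on $\omega(x)$ so that every point has a (possibly non-unique) negative semiorbit, yields that along any single full-orbit $\psi$ through a point $z \in \omega(x)$ the set $\overline{\{\psi(t): t \in \mathbb{R}\}}$ is totally ordered.

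Next I would set up the trichotomy by considering the ``ordering graph'' on $\omega(x)$. Define $\tilde B$ to be, roughly, the union of all full-orbit closures in $\omega(x)$ that are ordered with \emph{every} point of $\omega(x)$; more precisely I would look at the set of points $z \in \omega(x)$ such that $z \thicksim w$ for all $w \in \omega(x)$, and show this set is invariant and its closure $\tilde B$ is ordered. If $\tilde B = \omega(x)$ we are in case (a). If $\tilde B \subsetneq \omega(x)$, I would analyze a point $z \in \omega(x) \setminus \tilde B$: its full-orbit closure is ordered but $z$ fails to be comparable with some point of $\omega(x)$. Here the monotone-semiflow machinery (strong monotonicity, the structure of unordered sets, and the already-known results from \cite{F-W-W1} on the behavior of orbits in $\omega$-limit sets — in particular that an unordered invariant set consists of equilibria, and that an orbit not lying in the ``totally comparable core'' must have its $\alpha$- and $\omega$-limit sets in that core) forces the alternative: either all of $\omega(x) \setminus \tilde B$ collapses so that $\omega(x) \subset E$ is unordered, giving (b), or else every such $z$ satisfies $\alpha(z) \cup \omega(z) \subset \tilde B$ with $\alpha(z) \subset E$, giving (c). The connectedness and compactness of $\omega(x)$ are used to rule out mixed configurations and to guarantee $\tilde B$ is nonempty when it is a proper subset.

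The main obstacle I anticipate is the passage from ``every individual full-orbit closure is ordered'' to the \emph{global} trichotomy — i.e., controlling how incomparability between two orbits in $\omega(x)$ can occur. This is exactly where one needs the finer structure of strongly monotone semiflows: a limit of ordered relations can degenerate to the boundary of $C$, and one must show that the ``bad'' set $\omega(x) \setminus \tilde B$, if nonempty and not reducing to an unordered equilibrium set, must consist of connecting orbits whose limit sets land in $\tilde B$. I would handle this by arguing that $\tilde B$ is closed and invariant, that $\omega(x) \setminus \tilde B$ is open in $\omega(x)$ and forward/backward attracted to $\tilde B$ (using that $\omega$-limit and $\alpha$-limit sets of points in $\omega(x)$ are contained in $\omega(x)$, are ordered full-orbit closures themselves, and interact with $\tilde B$ via monotonicity), and finally invoking the known classification of unordered invariant sets as equilibria to pin down that $\alpha(z) \subset E$. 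Since this lemma is quoted from \cite{F-W-W1} (``Theorem B'' there), I would, in the write-up, give the above sketch and refer to \cite{F-W-W1} for the detailed verification of the degeneration-control step.
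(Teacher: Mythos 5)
Your proposal is, in effect, the same as the paper's treatment: the paper proves nothing here itself but simply cites \cite[Theorems A and B]{F-W-W1}, and you likewise defer the genuinely hard steps (the total ordering of each full-orbit closure, and the precise construction of $\tilde{B}$ with the degeneration-control argument) to that reference, while your surrounding sketch of how the trichotomy should be organized is consistent with the statement. Just be aware that the "careful bookkeeping" and "monotone-semiflow machinery" steps in your outline are exactly the nontrivial content of the cited theorems, so your write-up is acceptable only insofar as it rests on that citation, which is precisely how the paper handles it.
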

\begin{proof}See \cite[Theorem A and B]{F-W-W1}.
\end{proof}

For the alternative (c), more information can be obtained as follows:
\begin{lemma}\label{T-R-ordered trichotomy} If the alternative {\rm (c)} holds, then

\begin{itemize}
\item[{\rm (i)}] $\tilde{B}\thicksim \omega(x)$;
 \item[{\rm (ii)}] there exist $z,\tilde{z}\in\omega(x)\setminus\tilde{B}$ such that $z \rightharpoondown \tilde{z}$;
\item[{\rm (iii)}] $\alpha(z)=\alpha(\tilde{z})\subset E$. Moreover, for any sequence $t_k\rightarrow\infty$ with $\Phi_{-t_k}(z)\to q$ and $\Phi_{-t_k}(\tilde{z})\to \tilde{q}$, one has $q=\tilde{q}\in E$. In particular,

\begin{equation}\label{E:z-z-asym}
\norm{\Phi_{-t}(z)-\Phi_{-t}(\tilde{z})}\to 0, \text{ as } t\to \infty.
\end{equation}
Here, $\Phi_{-t}$ is the inverse of $\Phi_t$ on $\omega(x)$ for any $t\geq0$.
\end{itemize}
\end{lemma}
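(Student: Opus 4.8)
Part (i) is immediate: it is precisely the first clause of alternative (c) in Lemma~\ref{ordered trichotomy}. For part (ii), the plan is to exploit that, since alternative (c) holds, alternative (a) does not, so $\omega(x)$ is \emph{not} ordered; hence there exist $z,\tilde{z}\in\omega(x)$ with $z\rightharpoondown\tilde{z}$. If $z\in\tilde{B}$, then from $\tilde{B}\thicksim\omega(x)$ and $\tilde{z}\in\omega(x)$ we would obtain $z\thicksim\tilde{z}$, contradicting $z\rightharpoondown\tilde{z}$; hence $z\notin\tilde{B}$, and by the same reasoning $\tilde{z}\notin\tilde{B}$. Thus $z,\tilde{z}\in\omega(x)\setminus\tilde{B}$ is the required unordered pair.

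For part (iii) I fix this pair $z,\tilde{z}$. Since $\Phi_t$ admits a flow extension on $\omega(x)$, the negative semiorbits of $z$ and $\tilde{z}$ are unique, $\Phi_{-t}$ is well defined on $\omega(x)$, and $\Phi_t\circ\Phi_{-t}={\rm Id}$ there; consequently, by monotonicity, $\Phi_{-t}(z)\thicksim\Phi_{-t}(\tilde{z})$ for some $t\ge 0$ would push forward to $z=\Phi_t(\Phi_{-t}(z))\thicksim\Phi_t(\Phi_{-t}(\tilde{z}))=\tilde{z}$, so in fact $\Phi_{-t}(z)\rightharpoondown\Phi_{-t}(\tilde{z})$ for \emph{every} $t\ge 0$ — the two negative semiorbits remain pointwise unordered even though each limits onto $\tilde{B}$. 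The crux is then the claim: whenever $t_k\to\infty$, $\Phi_{-t_k}(z)\to q$ and $\Phi_{-t_k}(\tilde{z})\to\tilde{q}$, one has $q=\tilde{q}$. Here $q\in\alpha(z)$ and $\tilde{q}\in\alpha(\tilde{z})$, so by alternative (c) both lie in $\tilde{B}\cap E$, and $q\thicksim\tilde{q}$ since $\tilde{B}$ is ordered. Suppose $q\ne\tilde{q}$. Applying strong monotonicity to the \emph{equilibria} $q,\tilde{q}$ (with $t=1$, using $q\ne\tilde{q}$ and $q\thicksim\tilde{q}$) gives $q=\Phi_1(q)\thickapprox\Phi_1(\tilde{q})=\tilde{q}$, so $q-\tilde{q}\in\tint\,C$, an open set (recall $C=-C$). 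As $\Phi_{-t_k}(z)-\Phi_{-t_k}(\tilde{z})\to q-\tilde{q}$, for all large $k$ we get $\Phi_{-t_k}(z)-\Phi_{-t_k}(\tilde{z})\in\tint\,C\subset C$, i.e.\ $\Phi_{-t_k}(z)\thicksim\Phi_{-t_k}(\tilde{z})$, contradicting the previous observation. Hence $q=\tilde{q}\in\tilde{B}\cap E$.

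The remaining assertions follow from the crux by routine compactness in the compact set $\omega(x)$. For $\alpha(z)=\alpha(\tilde{z})$: given $q\in\alpha(z)$, pick $t_k\to\infty$ with $\Phi_{-t_k}(z)\to q$, pass to a subsequence with $\Phi_{-t_k}(\tilde{z})\to\tilde{q}\in\alpha(\tilde{z})$, and deduce $q=\tilde{q}\in\alpha(\tilde{z})$; thus $\alpha(z)\subset\alpha(\tilde{z})$, and symmetrically $\alpha(z)=\alpha(\tilde{z})$, which lies in $E$ by alternative (c). For \eqref{E:z-z-asym}: if $\norm{\Phi_{-t}(z)-\Phi_{-t}(\tilde{z})}$ did not tend to $0$, choose $\eps_0>0$ and $t_k\to\infty$ with $\norm{\Phi_{-t_k}(z)-\Phi_{-t_k}(\tilde{z})}\ge\eps_0$, extract convergent subsequences $\Phi_{-t_k}(z)\to q$ and $\Phi_{-t_k}(\tilde{z})\to\tilde{q}$, and invoke the crux to force $q=\tilde{q}$, contradicting $\eps_0>0$. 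I expect the only genuinely non-formal point to be the crux: recognizing that strong monotonicity applied to the two \emph{equilibria} promotes the mere order relation $q\thicksim\tilde{q}$ inside $\tilde{B}$ to the strong — hence open — relation $q\thickapprox\tilde{q}$, which is exactly what lets the ordering propagate back along the negative semiorbits and clash with $z\rightharpoondown\tilde{z}$; everything else is bookkeeping with the flow extension and compactness of $\omega(x)$.
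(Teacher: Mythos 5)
Your proof is correct. Note that the paper does not reprove this lemma at all—it simply cites \cite[Lemma 3.2]{F-W-W4} and remarks that the argument works on a general Banach space—and your reconstruction is essentially the argument behind that citation: (i) is read off from alternative (c), (ii) uses that (c) is the ``otherwise'' case so $\omega(x)$ is not ordered while $\tilde{B}\thicksim\omega(x)$ forces the unordered pair outside $\tilde{B}$, and the crux of (iii) is exactly the right one, namely that two distinct ordered equilibria of $\alpha(z)\cup\alpha(\tilde z)\subset\tilde{B}\cap E$ would be strongly ordered by strong monotonicity, so openness of ${\rm Int}\,C$ would make $\Phi_{-t_k}(z)\thicksim\Phi_{-t_k}(\tilde z)$ for large $k$, contradicting the backward-in-time unorderedness obtained by pushing $z\rightharpoondown\tilde z$ back with the flow extension; the remaining claims then follow by compactness of $\omega(x)$ as you say.
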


\begin{proof} See \cite[Lemma 3.2]{F-W-W4} (which holds on the general Banach space $X$).
\end{proof}

\begin{lemma}\label{invariant subspace}
Let $\Phi_t$ be a semiflow strongly focusing monotone with respect to $C$. Then, $(\Phi_t,\,D\Phi_t)$ admits a $k$-exponential separation along any nonempty compact subset $\Sigma\subset E$, with $k$-dimensional continuous invariant vector bundle $\Sigma\times (E_p)$ and $k$-codimensional continuous invariant vector bundle $\Sigma\times (F_p)$ such that, for any $p\in \Sigma$,
\begin{itemize}
\item [{\rm (i)}] $E_p \setminus\{0\}\subset{\rm Int}\,C$, $F_p\cap C=\{0\}$ and $E_p\bigoplus F_p=X$,
\item [{\rm (ii)}] $D_p\Phi_t (E_p)=E_p$ and $D_p\Phi_t(F_p)\subset F_p$, for any $t\geq 0$ {\rm(}and hence, $(D_p\Phi_{t}\mid_{E_p})^{-1}$ exists{\rm)},
\item [{\rm (iii)}] there exist constants $M_{1,p},M_{2,p}>1$ and $\alpha_p, \beta_p\in \mathbb{R}$ with $\alpha_p<\beta_p$ such that

\begin{equation}\label{E:ab-control}
\norm{D_p\Phi_t\mid_{F_p}}< M_{1,p} e^{\alpha_p t}\,\,\text{ and }\,\,
\norm{(D_p\Phi_{t}\mid_{E_p})^{-1}}< M_{2,p} e^{-\beta_p t}
\end{equation} for any $t\geq 0$. Moreover, $\beta_p,\alpha_p,M_{1,p},M_{2,p}$ can be taken as local constant functions on $\Sigma$.
\end{itemize}
\end{lemma}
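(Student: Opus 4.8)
The plan is to build the $k$-exponential separation along $\Sigma\subset E$ in two stages: first establish the bundle decomposition and the domination inequality \eqref{dominated-splitting}, then translate this into the spectral estimates \eqref{E:ab-control}. For the first stage I would apply the known $k$-exponential separation machinery for linear skew-product semiflows driven by strongly positive, strongly focusing operators (the theory underlying \cite{K-L-S,LW2} and developed in \cite{F-W-W2,F-W-W3}). Concretely, since $\Phi_t$ is strongly focusing monotone, Definition \ref{strongly focusing monotone}(ii) applied to the compact invariant set $\Sigma$ furnishes, for the fixed time $T>0$, a uniform separation index $\kappa>0$: for every $p\in\Sigma$ the operator $D_p\Phi_T=\int_0^1 D_{p+s\cdot 0}\Phi_T\,ds$ (here $x=y=p$, so $T_{(p,p)}=D_p\Phi_T$) is strongly focusing with separation index $>\kappa$, and $D_p\Phi_t$ is strongly positive w.r.t. $C$ for all $t>0$ by part (i). Compactness of the $x$-derivative together with uniform focusing gives a uniformly hyperbolic splitting of the cocycle $(\Phi_t,D\Phi_t)$ over $\Sigma$: one obtains a $k$-dimensional continuous invariant bundle $\Sigma\times(E_p)$ lying in $\mathrm{Int}\,C\cup\{0\}$, and a complementary $k$-codimensional continuous invariant bundle $\Sigma\times(F_p)$ with $F_p\cap C=\{0\}$, satisfying \eqref{dominated-splitting}. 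This yields (i) and (ii) of the lemma directly; invertibility of $D_p\Phi_t|_{E_p}$ follows because $E_p$ is finite-dimensional and $D_p\Phi_t$ maps $E_p$ isomorphically onto $E_{\Phi_t(p)}=E_p$ (using that $p\in E$ is an equilibrium, so $\Phi_t(p)=p$).

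The key point of the lemma is part (iii), and this is where I replace the Perron-type argument of the finite-dimensional case \cite[Theorem 1]{F-O-2} by a direct spectral analysis. Fix $p\in\Sigma$. Because $p$ is an equilibrium, $\{D_p\Phi_t\}_{t\ge 0}$ is a genuine $C_0$-semigroup of bounded operators on $X$, and the decomposition $X=E_p\oplus F_p$ reduces it into two $C_0$-semigroups: $D_p\Phi_t|_{E_p}$ on the finite-dimensional $E_p$ (hence a uniformly continuous group), and $D_p\Phi_t|_{F_p}$ on $F_p$. Since $D_p\Phi_t$ is compact for $t>0$, its spectrum on each invariant piece is discrete away from $0$, so I can define
\[
\alpha_p=\limsup_{t\to\infty}\frac{1}{t}\log\norm{D_p\Phi_t|_{F_p}},\qquad
\beta_p=\liminf_{t\to\infty}\frac{1}{t}\log m\bigl(D_p\Phi_t|_{E_p}\bigr),
\]
the first being the growth bound of the $F_p$-semigroup and the second (equal to $-$ the growth bound of $(D_p\Phi_t|_{E_p})^{-1}$) controlled via the spectral radius of $D_p\Phi_T|_{E_p}$. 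The domination inequality \eqref{dominated-splitting} forces $\alpha_p<\beta_p$: taking $v\in E_p\cap S$, $w\in F_p\cap S$ and $t=nT$ in \eqref{dominated-splitting}, letting $n\to\infty$ gives $\alpha_p\le \beta_p+\frac{\log\gamma}{T}<\beta_p$ (here I use that $\frac1t\log m(D_p\Phi_t|_{E_p})$ and $\frac1t\log\norm{D_p\Phi_t|_{E_p}v}$ have the same limit, again by finite-dimensionality of $E_p$). Having strict inequality between the abstract growth bounds, I then invoke the standard estimate that for a $C_0$-semigroup with growth bound $\omega_0$ and any $\alpha>\omega_0$ there is $M\ge 1$ with $\norm{S_t}\le M e^{\alpha t}$; choosing $\alpha_p',\beta_p'$ with $\alpha_p<\alpha_p'<\beta_p'<\beta_p$ and renaming gives \eqref{E:ab-control} with constants $M_{1,p},M_{2,p}>1$.

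For the final clause — that $\beta_p,\alpha_p,M_{1,p},M_{2,p}$ may be chosen locally constant on $\Sigma$ — I would argue by upper/lower semicontinuity. The bundles $p\mapsto E_p$, $p\mapsto F_p$ are continuous in the gap metric and the family $\{D_p\Phi_t\}$ depends continuously on $p$; hence $p\mapsto\norm{D_p\Phi_t|_{F_p}}$ and $p\mapsto m(D_p\Phi_t|_{E_p})$ are continuous for each fixed $t$, which makes $p\mapsto\alpha_p$ upper semicontinuous and $p\mapsto\beta_p$ lower semicontinuous; since $\alpha_p<\beta_p$ everywhere, around each $p_0$ one can pick a single pair of constants $\alpha<\beta$ valid on a neighborhood, and by compactness of $\Sigma$ finitely many such neighborhoods cover it, so the constants take only finitely many values — i.e. they are locally constant. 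The main obstacle I anticipate is the second stage: making precise the passage from the cocycle domination \eqref{dominated-splitting} (a statement about $m(\cdot|_{E_x})$ and $\norm{\cdot|_{F_x}}$ along orbits) to honest semigroup growth bounds at the \emph{fixed} point $p$, and in particular controlling $m(D_p\Phi_t|_{E_p})$ from below uniformly in $t$ rather than merely along the subsequence $t=nT$ — this requires interpolating over $t\in[nT,(n+1)T]$ using continuity of $\Phi$ and uniform bounds on $D_x\Phi_s$ for $s\in[0,T]$, $x$ near $\Sigma$, which is routine but is the crux of the argument.
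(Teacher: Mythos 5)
Your overall route is the same as the paper's: parts (i)--(ii) come from citing the existing exponential-separation theory (the paper invokes Tere\v{s}\v{c}\'{a}k's theorem, which needs only strong positivity and compactness of $D_x\Phi_t$, not the focusing hypothesis), and part (iii) comes from a spectral analysis of the semigroup $\{D_p\Phi_t\}_{t\ge0}$ at the equilibrium $p$, with the gap $\alpha_p<\beta_p$ forced by the domination inequality \eqref{dominated-splitting}. Your growth-bound formulation is an equivalent repackaging of what the paper does explicitly via the spectral mapping theorem on the finite-dimensional $E_p$, the eigenvector estimate, and Gelfand's formula for $r(D_p\Phi_{t_0}\mid_{F_p})$; up to the harmless rescaling slip ($\log\gamma$ versus $\log\gamma/T$), that part is fine.

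The genuine gap is in the final clause. First, upper/lower semicontinuity of $p\mapsto\alpha_p,\beta_p$ does not follow merely from continuity of $p\mapsto\norm{D_p\Phi_t\mid_{F_p}}$ and $p\mapsto m(D_p\Phi_t\mid_{E_p})$ at each fixed $t$: a limsup of continuous functions need not be upper semicontinuous. You must use that every $q\in\Sigma\subset E$ is an equilibrium, so $t\mapsto\log\norm{D_q\Phi_t\mid_{F_q}}$ is subadditive and the growth bound equals $\inf_{t>0}\frac1t\log\norm{D_q\Phi_t\mid_{F_q}}$, an infimum of continuous functions (and similarly for $(D_q\Phi_t\mid_{E_q})^{-1}$). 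Second, and more seriously, the lemma requires $M_{1,p},M_{2,p}$, not just the exponents, to be locally constant: the estimates \eqref{E:ab-control} must hold with one set of constants for all $q$ in a neighborhood of $p$ in $\Sigma$ and all $t\ge0$. Semicontinuity of the exponents only yields, for each nearby $q$, some constants $M_{1,q},M_{2,q}$ with no uniform bound. The missing argument fixes a time $t^*$ with $\norm{D_p\Phi_{t^*}\mid_{F_p}}\le\theta e^{\alpha_p t^*}$, $\theta<1$, uses continuity of the bundles and of $q\mapsto D_q\Phi_{t^*}$ to preserve this for $q$ near $p$, iterates by submultiplicativity, and interpolates on $[nt^*,(n+1)t^*]$ via a uniform bound on $\norm{D_q\Phi_s}$ for $s\in[0,t^*]$, $q\in\Sigma$ (with the analogous argument for the inverses on $E_q$). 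This uniformity in $q$ is exactly the step the paper imports from the proof of Lemma 3.5 of the finite-dimensional predecessor; your closing remark about interpolating in $t$ at the fixed point $p$ addresses uniformity in $t$, not this uniformity in $q$.
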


\begin{proof} For any given compact set $\Sigma\subset E$, the strong positivity and compactness of $D_p\Phi_t$ for any $p\in X$ and $t>0$ implies that the linear skew-product semiflow $(\Phi_t,\,D\Phi_t)$ admits a $k$-exponential separation along $\Sigma$ associated with $C$ (see, e.g. \cite[Theorem 4.1 and Corollary 2.2]{Tere}). So, there exist a $k$-dimensional continuous invariant vector bundle $\Sigma\times (E_p)$ and a $k$-codimensional continuous invariant vector bundle $\Sigma\times (F_p)$, which satisfies (i)-(ii) for any $p\in \Sigma$.

It remains to prove (iii). By virtue of \eqref{dominated-splitting}, one can find $M>0$ and $0<\gamma<1$ such that

\begin{equation}\label{E:EX-norm}
\norm{D_p \Phi_t\mid_{F_p}}\leq M\gamma^{t}\norm{D_p\Phi_t v}\,\text{ for all }t\ge 0 \,\,\text{and}\,\, v\in E_p\cap S,
\end{equation} where $S=\{v\in X:\norm{v}=1\}$. Denoted by $\sigma(D_p\Phi_t)$, $\sigma(D_p\Phi_t|_{E_p})$ and $\sigma(D_p\Phi_t|_{F_p})$ the spectrum of $D_p\Phi_t$ on $X$, $D_p\Phi_t|_{E_p}$ on $E_p$, and $D_p\Phi_t|_{F_p}$ on $F_p$ for any $p\in\Sigma$ and $t>0$ respectively. Note that $E_p$ is $k$ dimensional. One has that $\sigma(D_p\Phi_t|_{E_p})$ consists of eigenvalues with eigenvectors contained in the complexification of $E_p$ (which is also denoted by $E_p$). For given $t_0>0$, $\sigma(D_p\Phi_{t_0}\mid_{E_p})=\{\lambda_1,\lambda_2,\cdots,\lambda_k\}$ (counted with multiplicity) such that $\abs{\lambda_1}\geq\abs{\lambda_2}\geq\cdots\abs{\lambda_k}$. By virtue of the spectral mapping theorem (cf. \cite[Theorem 2.52]{Chi}), there is a constant $\tilde{\beta}_{p,\lambda_j}$ such that $e^{\tilde{\beta}_{p,\lambda_j}t_0}=\lambda_{j}$ for each $j\in\{1,2,\cdots,k\}$; and more, $\sigma(D_p\Phi_t\mid_{E_p})=\{e^{\tilde{\beta}_{p,\lambda_1}t},e^{\tilde{\beta}_{p,\lambda_2}t},\cdots,e^{\tilde{\beta}_{p,\lambda_k}t}\}$ for each $t>0$. Clearly, $\text{Re}(\tilde{\beta}_{p,\lambda_1})\geq \text{Re}(\tilde{\beta}_{p,\lambda_2})\geq\cdots \text{Re}(\tilde{\beta}_{p,\lambda_k})$. For each $\lambda_j$, there is an eigenvector $u_j+iv_j$ of $D_p\Phi_{t_0}$ such that $D_p\Phi_{t_0}(u_j+iv_j)=\lambda_j(u_j+iv_j)$, where $i$ is the imaginary number. It implies that $D_p\Phi_{nt_0}(u_j+iv_j)=(\lambda_j)^n(u_j+iv_j)$, $D_p\Phi_{nt_0}(u_j-iv_j)=(\overline{\lambda_j})^n(u_j-iv_j)$ for any $j\in\{1,2,\cdots,k\}$ and $n\in \mathbb{N}^+$. One has that 

$$\begin{aligned}|\lambda_j|^n(\norm{u_j+iv_j}+\norm{u_j-iv_j})&=\norm{D_p\Phi_{nt_0}|_{E_p}(u_j+iv_j)}+\norm{D_p\Phi_{nt_0}|_{E_p}(u_j-iv_j)}\\
&\geq \norm{D_p\Phi_{nt_0}|_{E_p}u_j}+\norm{D_p\Phi_{nt_0}|_{E_p}v_j}\\&\overset{\eqref{E:EX-norm}}{\geq} M^{-1} \gamma^{-nt_0}(\norm{u_j}+\norm{v_j})\norm{D_p\Phi_{nt_0}\mid_{F_p}}
\end{aligned}$$ for any $n\in \mathbb{N}^+$. It then follows that $|\lambda_j|^n \gamma^{nt_0}\frac{M(\norm{u_j+iv_j}+\norm{u_j-iv_j})}{\norm{u_j}+\norm{v_j}}\geq \norm{D_p\Phi_{nt_0}\mid_{F_p}}$ for any $n\in \mathbb{N}^+$. Together with Gelfand's formulation on the spectral radius $r(D_p\Phi_{t_0}\mid_{F_p})$ of bounded operator $D_p\Phi_{t_0}\mid_{F_p}$, that is, $r(D_p\Phi_{t_0}\mid_{F_p})=\lim\limits_{n\rightarrow +\infty}\norm{D_p\Phi_{nt_0}\mid_{F_p}}^{\frac{1}{n}}$ (see \cite[Theorem 1.2.7]{Mur}), one has that 

$$\gamma\abs{\lambda_j}\geq r(D_p\Phi_{t_0}\mid_{F_p})$$ for each $j\in\{1,2,\cdots,k\}$. Let $\tilde{\beta}_p=\text{Re}(\tilde{\beta}_{p,\lambda_k})$ and take $\tilde{\alpha}_p\in \mathbb{R}$ such that $e^{\tilde{\alpha}_p t_0}=r(D_p\Phi_{t_0}\mid_{F_p})$. Recall that $\gamma\in (0,1)$. One has that $\tilde{\alpha}_p<\tilde{\beta}_p$. Then, we have

$$\lim\limits_{n\rightarrow \infty}\norm{(D_p\Phi_{n t_0}|_{F_p})}^{\frac{1}{n}}=e^{\tilde{\alpha}_pt_0}\,\,\text{ and }\,\, \lim\limits_{n\rightarrow \infty}\norm{\big[(D_p\Phi_{t_0}|_{E_p})^{-1}\big]^{n}}^{\frac{1}{n}}=e^{-\tilde{\beta}_pt_0}.$$

Take $\beta_p=\tilde{\beta}_p-\frac{\tilde{\beta}_p-\tilde{\alpha}_p}{4}$ and $\alpha_p=\tilde{\alpha}_p+\frac{\tilde{\beta}_p-\tilde{\alpha}_p}{4}$. By repeating arguments in \cite[the part of the proof of Lemma 3.5 on page 9]{F-W-W4}, we obtain that for any $p\in\Sigma$, there are constants $M_{1,p}$, $M_{2,p}>1$ and a small open neighborhood $\mathcal{V}_p$ of $p\in\Sigma$ such that 

$$\norm{D_q\Phi_{t}|_{F_q}}<M_{1,p}e^{\alpha_pt}\,\,\text{ and }\,\,\norm{(D_q\Phi_{t}|_{E_q})^{-1}}<M_{2,p}e^{-\beta_pt}$$ for any $t\geq0$ and $q\in \mathcal{V}_p\cap\Sigma$. Thus, we can take $\beta_p,\alpha_p, M_{1,p}, M_{2,p}$ as local constant functions w.r.t. $p\in \Sigma$. Thus, we have proved (iii).

Therefore, We have completed the proof of this Lemma.
 \end{proof}

Recall that $\Pi^{F_p}$ are the natural projection on $F_p$ along $E_p$ and $\Pi^{E_p}=\text{I}-\Pi^{F_p}$, where $E_p$ and $F_p$ are the invariant subspaces mentioned in Lemma \ref{invariant subspace}. We have the following lemma.

\begin{lemma}\label{decompostion ratio} Let $\Sigma\subset E$ be a nonempty compact set. Then,
\item{(i)} the projections $\Pi^{E_p}$ and $\Pi^{F_p}$ are bounded uniformly for $p\in \Sigma$.
\item{(ii)} there exists $\kappa_{\Sigma}>0$ such that, if $v\in X\setminus\{0\}$ satisfies $\norm{\Pi^{E_p}v}\geq \kappa_{\Sigma}\norm{\Pi^{F_p}v}$ for some $p\in\Sigma$, then $v\in {\rm Int}\,C$.
\end{lemma}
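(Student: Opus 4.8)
The plan is to exploit the continuity of the bundles $\Sigma \times (E_p)$ and $\Sigma \times (F_p)$ together with the compactness of $\Sigma$, and the key geometric input from Lemma \ref{invariant subspace}(i) that $E_p \setminus \{0\} \subset \mathrm{Int}\,C$ while $F_p \cap C = \{0\}$.

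For part (i), I would argue that the map $p \mapsto \Pi^{E_p} \in L(X)$ is continuous on $\Sigma$ in the operator norm. This follows from the continuity of $p \mapsto E_p$ and $p \mapsto F_p$ in the gap metric (both bundles are continuous vector bundles by Lemma \ref{invariant subspace}) together with the fact that $X = E_p \oplus F_p$: a standard estimate (see, e.g., \cite{Kato}) shows that if two complementary decompositions are close in the gap metric then the associated projections are close in operator norm, provided the decomposition stays bounded. Concretely, I would first establish that $\sup_{p\in\Sigma}\norm{\Pi^{E_p}} < \infty$ by a compactness/contradiction argument: if $\norm{\Pi^{E_{p_n}}} \to \infty$ along some $p_n \to p_* \in \Sigma$, pick unit vectors $v_n$ with $\norm{\Pi^{E_{p_n}} v_n}/\norm{v_n}$ blowing up; normalizing the $E$-components one extracts (using that $E_{p_n}\to E_{p_*}$ in the gap metric and $\dim E_{p_*}=k<\infty$, so the unit sphere of $E_{p_*}$ is compact) a limit configuration in which a unit vector of $E_{p_*}$ lies in $F_{p_*}$, contradicting $X = E_{p_*}\oplus F_{p_*}$. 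Once the uniform bound on $\Pi^{E_p}$ is in hand, $\Pi^{F_p} = \mathrm{I} - \Pi^{E_p}$ is uniformly bounded as well.

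For part (ii), suppose no such $\kappa_\Sigma$ exists. Then there are $p_n \in \Sigma$ and $v_n \in X\setminus\{0\}$ with $\norm{\Pi^{E_{p_n}} v_n} \geq n\,\norm{\Pi^{F_{p_n}} v_n}$ yet $v_n \notin \mathrm{Int}\,C$. Normalize so that $\norm{\Pi^{E_{p_n}} v_n} = 1$; then $\norm{\Pi^{F_{p_n}} v_n} \leq 1/n \to 0$. Passing to a subsequence, $p_n \to p_* \in \Sigma$ by compactness of $\Sigma$. Since $\Pi^{E_{p_n}} v_n$ is a unit vector of the $k$-dimensional space $E_{p_n}$ and $E_{p_n} \to E_{p_*}$ in the gap metric, a further subsequence gives $\Pi^{E_{p_n}} v_n \to e_* \in E_{p_*}$ with $\norm{e_*} = 1$ (here the finite-dimensionality of $E_{p_*}$ and the gap convergence are essential). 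Hence $v_n = \Pi^{E_{p_n}} v_n + \Pi^{F_{p_n}} v_n \to e_*$. By Lemma \ref{invariant subspace}(i), $e_* \in E_{p_*}\setminus\{0\} \subset \mathrm{Int}\,C$, so $v_n \in \mathrm{Int}\,C$ for all large $n$, contradicting $v_n \notin \mathrm{Int}\,C$.

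The main obstacle I anticipate is making the extraction of the limit unit vector $e_* \in E_{p_*}$ rigorous: one must carefully combine the gap-metric convergence $E_{p_n} \to E_{p_*}$ with the compactness of the unit sphere of the finite-dimensional limit space $E_{p_*}$ to conclude that a subsequence of the unit vectors $\Pi^{E_{p_n}} v_n$ converges in norm (not merely weakly) to a unit vector of $E_{p_*}$. This is where the hypothesis that $E_p$ is $k$-dimensional — rather than just closed — is used crucially, since in an infinite-dimensional Banach space bounded sequences need not have norm-convergent subsequences. A convenient way to package this is to choose, for each $n$, a basis of $E_{p_n}$ converging to a basis of $E_{p_*}$ (possible by gap continuity on the finite-dimensional Grassmannian $G(k,X)$), express $\Pi^{E_{p_n}} v_n$ in coordinates, and extract convergence of the (bounded) coordinate vectors in $\mathbb{R}^k$.
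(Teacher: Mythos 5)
Your proposal is correct. Note that the paper itself gives no argument for this lemma --- it simply cites \cite[Lemma 2.9]{F} --- and your compactness-plus-gap-continuity argument (uniform boundedness of the projections by a contradiction argument producing a nonzero vector in $E_{p_*}\cap F_{p_*}$, and part (ii) by extracting a norm-convergent subsequence of unit vectors $\Pi^{E_{p_n}}v_n$ into the finite-dimensional limit fibre $E_{p_*}\subset{\rm Int}\,C\cup\{0\}$) is exactly the standard route such a proof takes, with the finite-dimensionality of $E_p$ used in the right place. The only point worth tightening is the $F$-bundle: its continuity is defined in the paper via a continuous dual bundle $\Sigma\times(L_p)\subset\Sigma\times X^*$ with ${\rm Ker}(L_p)=F_p$, not directly by the gap metric, but the property you actually need --- that a convergent sequence $w_n\in F_{p_n}$, $w_n\to w$, $p_n\to p_*$ has $w\in F_{p_*}$ --- follows immediately by approximating each unit functional in $L_{p_*}$ by unit functionals in $L_{p_n}$ and passing to the limit in $\ell_n(w_n)=0$.
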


\begin{proof} See \cite[Lemma 2.9]{F}.
\end{proof}

\vskip 5mm
Let  $\mathcal{M}_x=\{y\in X:\,y\thicksim x\,\text{and}\,y\neq x\}$ for any $x\in X$.

\begin{lemma}\label{L:instabilty}Let $\Phi_t$ be a semiflow strongly focusing monotone with respect to $C$. Assume that $\lambda_{kz}>0$ for any $z$ contained in the nonempty comapct  set $\omega(x)$. Then, there exists a constant $\delta^{''}>0$ such that for any $y\in \mathcal{M}_x$

$$\limsup_{t\to +\infty}\norm{\Phi_t(y)-\Phi_t(x)}\ge \delta^{''}.$$
\end{lemma}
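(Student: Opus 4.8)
Here is how I would approach Lemma~\ref{L:instabilty}.

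The plan is to argue by contradiction and produce a single constant $\delta''>0$, depending only on $\Phi_t$ and $C$, for which $\limsup_{t\to+\infty}\norm{\Phi_t(y)-\Phi_t(x)}<\delta''$ cannot hold for any $y\in\mathcal{M}_x$. Write $w_t:=\Phi_t(y)-\Phi_t(x)$; since $y\thicksim x$ and $y\neq x$, strong monotonicity gives $w_t\in\tint\,C\setminus\{0\}$ for $t>0$. The mechanism is this: if $\norm{w_t}$ were eventually below $\delta''$, then for large $t$ both $\Phi_t(x)$ and $\Phi_t(y)$ would stay within $\delta$ of points of $\omega(x)$ (immediate from $\norm{w_t}<\delta''$ together with $\Phi_t(x)\to\omega(x)$, so no precompactness of $O^+(y)$ is needed), $\delta$ being the constant attached by Definition~\ref{strongly focusing monotone}(ii) to $\Sigma=\omega(x)$. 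Hence Definition~\ref{strongly focusing monotone}(ii) would supply a strongly focusing $T_{(\Phi_t(y),\Phi_t(x))}$ of separation index $>\kappa$ with $T_{(\Phi_t(y),\Phi_t(x))}w_t=w_{t+T}$, so $w_{t+T}/\norm{w_{t+T}}$ sits at distance $\ge\kappa$ from $X\setminus C$ for all large $t$; combined with the $k$-exponential separation $X=E_p\oplus F_p$ along $\omega(x)$ and the hypothesis $\lambda_{kz}>0$, this should force $\norm{w_t}$ to grow geometrically — contradicting $\norm{w_t}<\delta''$.

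I would first prepare two ingredients. (1) \emph{Uniform expansion on the $E$-bundle.} The function $a(z,t):=\log m(D_z\Phi_t|_{E_z})$ is a continuous superadditive cocycle over $\Phi_t|_{\omega(x)}$, since $D_z\Phi_{t+s}=D_{\Phi_s(z)}\Phi_t\circ D_z\Phi_s$ and $D_z\Phi_s(E_z)=E_{\Phi_s(z)}$ give $m(D_z\Phi_{t+s}|_{E_z})\ge m(D_z\Phi_s|_{E_z})\,m(D_{\Phi_s(z)}\Phi_t|_{E_{\Phi_s(z)}})$; hence $b(t):=\inf_{z\in\omega(x)}a(z,t)$ is also superadditive, so $b(t)/t\to\lambda_0:=\sup_t b(t)/t$. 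The hypothesis $\lambda_{kz}>0$ for \emph{every} $z\in\omega(x)$, via a standard invariant-measure argument on the compact set $\omega(x)$ (Kingman applied to $-a$), forces $\lambda_0>0$; thus there is $t_*>0$ with $m(D_z\Phi_t|_{E_z})\ge e^{\lambda_0 t/2}$ for all $z\in\omega(x)$ and $t\ge t_*$. (In the intended application $\omega(x)\subset E$, and this is read off at once from \eqref{E:ab-control} in Lemma~\ref{invariant subspace}, whose local constants $\beta_p$ are then forced positive; the measure-theoretic detour is only for the statement in full generality.) (2) \emph{Interiority forces $E$-dominance.} Using $F_p\cap C=\{0\}$ (Lemma~\ref{invariant subspace}(i), Definition~\ref{D:ES-separation}(iv)) and the uniform bound $C_0$ on $\norm{\Pi^{E_p}},\norm{\Pi^{F_p}}$ (Lemma~\ref{decompostion ratio}(i)), any unit $v$ with $\di(v,X\setminus C)\ge\kappa$ (take $\kappa<1$) satisfies $\di(v,F_p)\ge\kappa$ — no point of $F_p$ lies in $B_\kappa(v)\subset C$ — hence $\norm{\Pi^{E_p}v}\ge\di(v,F_p)\ge\kappa$ and $\norm{\Pi^{F_p}v}\le C_0$, uniformly in $p\in\omega(x)$.

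The core step is one long-time growth estimate. Fix $N$ so large that $NT\ge t_*$ and $e^{\lambda_0 NT/2}\big(\kappa-C_0 M\gamma^{NT}\big)\ge 4$, where $M,\gamma\in(0,1)$ come from \eqref{dominated-splitting}; then pick $\delta''\le\delta/2$ so small that $z\mapsto D_z\Phi_{NT}$ has oscillation $\le 1$ over any $2\delta''$-ball meeting $\omega(x)$. Assuming $\limsup_t\norm{w_t}<\delta''$, fix $t_0$ beyond which $\norm{w_t}<\delta''$ and $\di(\Phi_t(x),\omega(x))<\delta''$, and for $t\ge t_0$ let $p(t)\in\omega(x)$ be nearest to $\Phi_t(x)$. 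The integral mean-value identity gives
\begin{equation*}
w_{t+NT}=\Big(\int_0^1 D_{\Phi_t(x)+r w_t}\Phi_{NT}\,dr\Big)w_t=D_{p(t)}\Phi_{NT}w_t+\rho_t,\qquad \norm{\rho_t}\le\norm{w_t},
\end{equation*}
and for $t\ge t_0+T$ the vector $w_t/\norm{w_t}$ is $\kappa$-interior (it is $T_{(\Phi_{t-T}(y),\Phi_{t-T}(x))}w_{t-T}$ rescaled), so ingredient (2) yields $\norm{\Pi^{E_{p(t)}}w_t}\ge\kappa\norm{w_t}$ and $\norm{\Pi^{F_{p(t)}}w_t}\le C_0\norm{w_t}$. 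Splitting $w_t$ along $E_{p(t)}\oplus F_{p(t)}$ and using $D_{p(t)}\Phi_{NT}E_{p(t)}=E_{\Phi_{NT}(p(t))}$, $m(D_{p(t)}\Phi_{NT}|_{E_{p(t)}})\ge e^{\lambda_0 NT/2}$ and \eqref{dominated-splitting},
\begin{equation*}
\norm{w_{t+NT}}\ \ge\ m(D_{p(t)}\Phi_{NT}|_{E_{p(t)}})\big(\norm{\Pi^{E_{p(t)}}w_t}-M\gamma^{NT}\norm{\Pi^{F_{p(t)}}w_t}\big)-\norm{\rho_t}\ \ge\ 3\norm{w_t}.
\end{equation*}
Iterating over blocks of length $NT$ gives $\norm{w_{t_0+T+jNT}}\ge 3^{\,j}\norm{w_{t_0+T}}$ with $w_{t_0+T}\neq 0$, contradicting $\norm{w_t}<\delta''$; so the required $\delta''$ exists.

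The step I expect to be the genuine obstacle is this long-time estimate. In finite dimensions one follows the projectivized difference and invokes an inverse/spectral-radius bound, but here $D_y\Phi_t$ need not be invertible and the unit ball is not compact, so that route is closed. The device that rescues it is to pass \emph{in a single jump} to a time-step $NT$ so long that the uniformised positive $k$-exponent already beats both the $F$-component — via the domination \eqref{dominated-splitting}, so no iterated inverses are needed — and the \emph{single-step} linearization error $\rho_t$, made negligible by choosing $\delta''$ small \emph{after} $N$; the strongly focusing monotonicity is precisely what legitimises the iteration, since at every stage it drives $w_t$ back into a fixed uniformly interior subcone of $C$, regardless of its previous direction. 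The secondary technical point, the uniformisation of $\lambda_{kz}>0$ over $\omega(x)$, is trivial when $\omega(x)\subset E$ but in general needs the cocycle/measure argument above.
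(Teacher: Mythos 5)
The paper does not actually prove Lemma \ref{L:instabilty} in-house: it simply cites \cite[Lemma 4.2]{F}, so there is no in-paper argument to match you against. Judged on its own, your proof is essentially sound, and it is very much in the spirit of the estimates the paper does carry out elsewhere (the interiority-to-projection bound you derive in ingredient (2) is exactly the mechanism behind \eqref{Mainsubspace-estimates}, and your single long block $NT$ chosen \emph{before} $\delta''$, so that the uniform $E$-expansion beats both the $F$-component via \eqref{dominated-splitting} and the one-step linearization error, is the same device as in Lemma \ref{p-notin-omega.y} and in the proof of Lemma \ref{Lemma for iii}). The key structural points are right: all constants ($\delta,T,\kappa$ from Definition \ref{strongly focusing monotone}(ii) applied to $\omega(x)$, the projection bound, $M,\gamma$, $\lambda_0$, $t_*$, $N$, $\delta''$) are fixed independently of $y$; the focusing operator re-injects $w_t/\norm{w_t}$ into a uniformly interior subcone at every step, so no invertibility of $D\Phi_t$ and no compactness of the unit ball are needed; and the geometric growth $\norm{w_{t+NT}}\ge 3\norm{w_t}$ contradicts $\norm{w_t}<\delta''$.

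Two points should be tightened. First, your argument uses $d(\Phi_t(x),\omega(x))\to 0$; a nonempty compact $\omega(x)$ alone does not guarantee this, so you are implicitly invoking the precompactness of $O^+(x)$ that holds in every application of the lemma — state it. Second, the uniformization of the hypothesis $\lambda_{kz}>0$ into ``$m(D_z\Phi_t|_{E_z})\ge e^{\lambda_0 t/2}$ for all $z\in\omega(x)$, $t\ge t_*$'' is not obtained from Kingman alone: pointwise positive $\limsup$ plus Kingman gives $\Lambda(\mu)>0$ for every ergodic invariant measure, and you then need a semi-uniform (sub/superadditive) ergodic theorem together with semicontinuity over the compact set of invariant measures to get the uniform $t_*$; you flag this, and in the intended application $\omega(x)\subset E$ it is indeed immediate from the locally constant $\beta_p>0$ of Lemma \ref{invariant subspace}(iii) and compactness, but in the general statement the precise citation (Schreiber, Sturman--Stark) should replace ``Kingman''. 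Minor bookkeeping: $M$ in \eqref{dominated-splitting} is only $>0$, not in $(0,1)$ (harmless, since $\gamma^{NT}\to 0$), and the continuous bundle with uniformly bounded projections along $\omega(x)$ when $\omega(x)\not\subset E$ should be justified by the Tere\v{s}\v{c}\'{a}k-type result the paper cites rather than by Lemmas \ref{invariant subspace}--\ref{decompostion ratio}, which are stated only for $\Sigma\subset E$.
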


\begin{proof} See \cite[Lemma 4.2]{F}.\end{proof}
 
\begin{lemma}\label{p-notin-omega.y}
Let $\Phi_t$ be a semiflow strongly focusing monotone with respect to $C$. Let $p\in E$ and $t_0>0$. Assume that $\abs{\lambda}>1$ for any $\lambda\in \sigma(D_p\Phi_{t_0}\mid_{E_p})$, where $\sigma(D_p\Phi_{t_0}\mid_{E_p})$ is the spectrum of $D_p\Phi_{t_0}\mid_{E_p}$. Then, there exist constants $\rho, \kappa^*, T>0$ and $\tilde{T}$ with $\tilde{T}>2T$ such that if a point $y\in \mathcal{M}_p$ satisfies $\Phi_t(y)\in B_{\rho}(p)$ for any $t\in [0, N\tilde{T}]$ with $N\in \mathbb{N}$, then for any non-negative integer $n\leq N$, ones have 

\begin{equation}\label{Lower-bound-growth-estimates}\begin{aligned}
&\quad \norm{\Phi_{n\tilde{T}+\tau}(y)-p}\geq 3^{n}\kappa^*\norm{\Phi_T(y)-p}\,\, \text{with}\,\,\tau\in[T,\tilde{T}+T],\\
&\text{and}\\
&\quad \norm{\Phi_{n\tilde{T}+\tau}(y)-p}\geq 3^{n+1}\norm{\Phi_T(y)-p}\,\, \text{with}\,\,\tau\in[\tilde{T},\tilde{T}+T].
\end{aligned}\end{equation} Consequently,  if $N>0$, then

\begin{equation}\label{Lower-bound-growth-estimates-coro} \norm{\Phi_{n\tilde{T}+\tau}(y)-p}\geq 3^{n}\norm{\Phi_T(y)-p}\,\, \text{with}\,\,\tau\in[0,T].\end{equation} for any positive integer $n\leq N+1$.
\end{lemma}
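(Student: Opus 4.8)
The plan is to reduce everything to a growth estimate for the $E_p$-component of $\Phi_t(y)-p$, and then to use the strongly focusing monotonicity to transfer this growth back to the genuine distance $\norm{\Phi_t(y)-p}$. First I would fix the data coming from Lemma \ref{invariant subspace}: since $\abs{\lambda}>1$ for every $\lambda\in\sigma(D_p\Phi_{t_0}\mid_{E_p})$, the spectral-mapping computation already performed there gives $\beta_p>0$ (and $\alpha_p$ can, after possibly shrinking the gap, be arranged either sign, but in any case $\alpha_p<\beta_p$); so there are constants $M_{1,p},M_{2,p}>1$ with $\norm{D_p\Phi_t\mid_{F_p}}<M_{1,p}e^{\alpha_p t}$ and $\norm{(D_p\Phi_t\mid_{E_p})^{-1}}<M_{2,p}e^{-\beta_p t}$ for all $t\ge 0$. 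The second inequality is exactly what lets us bound the $E_p$-component from below: $\norm{\Pi^{E_p}D_p\Phi_t v}=\norm{D_p\Phi_t\mid_{E_p}\Pi^{E_p}v}\ge M_{2,p}^{-1}e^{\beta_p t}\norm{\Pi^{E_p}v}$. Choose $\tilde{T}$ large enough (a multiple of $t_0$) that $M_{2,p}^{-1}e^{\beta_p\tilde T}$ exceeds, say, $10$ times all the accumulated error constants below, and then pick $T$ with $\tilde{T}>2T$ and $T$ large enough that $E_p$-dominance over $F_p$ (via \eqref{dominated-splitting}, i.e. the $M\gamma^t$ factor) has already kicked in by time $T$. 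The constant $\kappa^*$ will be built from $M_{2,p}^{-1}$, the uniform projection bounds of Lemma \ref{decompostion ratio}(i), and the focusing constant $\kappa_\Sigma$ of Lemma \ref{decompostion ratio}(ii).

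Next I would linearize. Since $y\in\mathcal{M}_p$, i.e. $y\thicksim p$ and $y\ne p$, monotonicity and strong positivity of $D_x\Phi_t$ give $\Phi_t(y)\thickapprox\Phi_t(p)=p$ for $t>0$, so $\Phi_t(y)-p\in\mathrm{Int}\,C$ for all $t>0$; in particular $\Phi_t(y)-p\ne 0$. Writing $v(t)=\Phi_t(y)-p$, the mean-value representation $v(s+t)=\big(\int_0^1 D_{p+\theta v(s)}\Phi_t\,d\theta\big)v(s)$ together with the $C^1$-smoothness and the hypothesis $\Phi_t(y)\in B_\rho(p)$ for $t\in[0,N\tilde T]$ shows that, provided $\rho$ is small, $\int_0^1 D_{p+\theta v(s)}\Phi_t\,d\theta$ is a small perturbation of $D_p\Phi_t$ uniformly for $s,s+t$ in the relevant range. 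This is where $\rho$ is chosen: small enough that the perturbation never spoils the $E_p$-lower bound by more than a factor close to $1$, and small enough (shrinking further if needed) that the image of $B_\rho(p)-p$ under these near-$D_p\Phi_t$ maps stays in the cone $C$ when it should. With $\rho$ fixed this way, one gets the one-step estimate $\norm{\Pi^{E_p}v(s+\tilde T)}\ge c\,e^{\beta_p\tilde T}\norm{\Pi^{E_p}v(s)}$ with $c=c(\rho)\to M_{2,p}^{-1}$ as $\rho\to 0$, hence $\ge 3\norm{\Pi^{E_p}v(s)}$ for $\tilde T$ chosen as above; iterating over $n$ blocks of length $\tilde T$ yields $\norm{\Pi^{E_p}v(n\tilde T+\tau)}\ge 3^n\,(\text{const})\,\norm{\Pi^{E_p}v(\tau)}$, and a complementary easy bound handles the shift $\tau$ within a block. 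The focusing/cone input of Lemma \ref{decompostion ratio}(ii) is then invoked at the endpoint: because $v(n\tilde T+\tau)\in\mathrm{Int}\,C$ it automatically satisfies the ratio condition only in one direction, so instead I would run the argument keeping track of $\Pi^{E_p}v$ and controlling $\Pi^{F_p}v$ from above by $M_{1,p}e^{\alpha_p t}$ applied to $\Pi^{F_p}v(T)$ (plus perturbation), and then bound $\norm{v(n\tilde T+\tau)}\ge\norm{\Pi^{E_p}v(n\tilde T+\tau)}/\norm{\Pi^{E_p}}_{\mathrm{op}}$ using Lemma \ref{decompostion ratio}(i); comparing with $\norm{v(T)}\le(\text{const})(\norm{\Pi^{E_p}v(T)}+\norm{\Pi^{F_p}v(T)})$ and noting $\norm{\Pi^{E_p}v(T)}\gtrsim\norm{\Pi^{F_p}v(T)}$ (by $T$-dominance) gives \eqref{Lower-bound-growth-estimates} after absorbing constants into $\kappa^*$ and choosing $\tilde T$ slightly larger so that $3^n\kappa^*$ can be bumped to $3^{n+1}$ on the sub-interval $\tau\in[\tilde T,\tilde T+T]$. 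Finally \eqref{Lower-bound-growth-estimates-coro} follows by re-indexing: the interval $[0,T]$ inside block $n+1$ is the tail $\tau\in[\tilde T,\tilde T+T]$ of block $n$, so the second line of \eqref{Lower-bound-growth-estimates} with $n$ replaced suitably gives $\norm{\Phi_{n\tilde T+\tau}(y)-p}\ge 3^n\norm{\Phi_T(y)-p}$.

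The main obstacle I anticipate is the bookkeeping around the \emph{direction} of the cone condition in Lemma \ref{decompostion ratio}(ii): that lemma converts an $E_p$-domination of the $F_p$-component into membership in $\mathrm{Int}\,C$, whereas here I already know $v(t)\in\mathrm{Int}\,C$ and I \emph{want} a lower bound on $\norm{\Pi^{E_p}v(t)}$. So the delicate point is to show the $E_p$-component cannot collapse relative to the $F_p$-component as $t$ grows — and this is precisely where one uses that $\beta_p>0$ together with $\alpha_p<\beta_p$: the $F_p$-direction grows strictly slower (or decays), so any fixed initial ratio $\norm{\Pi^{E_p}v(T)}/\norm{\Pi^{F_p}v(T)}$ (which is bounded below once $T$ is large, by \eqref{dominated-splitting}) is only improved under iteration. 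Keeping the perturbation errors from eating the gap $\beta_p-\alpha_p$ over arbitrarily many blocks forces the choice of $\rho$ to be uniform in $n$, which is possible exactly because the one-step multiplicative error $c(\rho)$ can be made as close to $1$ as we like independently of which block we are in (the estimates in Lemma \ref{invariant subspace}(iii) are time-global), so the geometric growth survives. The rest — choosing $T,\tilde T,\kappa^*$ and verifying the two displayed inequalities and their corollary — is then routine.
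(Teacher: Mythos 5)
Your overall architecture matches the paper's: use $\beta_p>0$ (from $\abs{\lambda}>1$ on $\sigma(D_p\Phi_{t_0}\mid_{E_p})$ via Lemma \ref{invariant subspace}), linearize $\Phi_t(y)-p$ around $p$ with a mean-value/integral remainder, shrink $\rho$ so the remainder is small on the fixed window $[0,\tilde{T}+T]$, get a per-block expansion factor $3$, iterate block by block, and re-index to obtain \eqref{Lower-bound-growth-estimates-coro}. However, there is a genuine gap at the one point you yourself flag as delicate: the uniform lower bound on the ratio $\norm{\Pi^{E_p}(\Phi_T(y)-p)}/\norm{\Phi_T(y)-p}$. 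You propose to obtain it ``by $T$-dominance'' from \eqref{dominated-splitting}. That cannot work. The dominated splitting controls only the linearized cocycle on the bundle over $\{p\}$; it multiplies whatever initial ratio $\norm{\Pi^{E_p}(y-p)}/\norm{\Pi^{F_p}(y-p)}$ you start with by a fixed factor depending on $T$. But in an infinite-dimensional Banach space $C\cap S$ is not compact, and although $F_p\cap C=\{0\}$ forces $\Pi^{E_p}u\neq 0$ for $u\in C\setminus\{0\}$, the infimum of $\norm{\Pi^{E_p}u}/\norm{u}$ over $u\in C\cap S$ may be $0$. Hence for $y\in\mathcal{M}_p\cap B_{\rho}(p)$ the initial ratio has no uniform positive lower bound, no fixed $T$ makes the ratio at time $T$ uniformly bounded below, and the nonlinear error term, which is of size $\epsilon(\rho)\norm{y-p}$, cannot be absorbed into the main term $M_{2,p}^{-1}e^{\beta_p t}\norm{\Pi^{E_p}(y-p)}$ by any choice of $\rho$ uniform in the block index; both your one-step estimate and the final comparison of $\norm{\Phi_{n\tilde{T}+\tau}(y)-p}$ with $\norm{\Phi_T(y)-p}$ then fail. (Lemma \ref{decompostion ratio}(ii), which you invoke for $\kappa_\Sigma$, indeed points in the wrong direction, as you note, and does not repair this.)

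The missing ingredient is the strongly focusing monotonicity itself (Definition \ref{strongly focusing monotone}(ii)), which you name in your opening sentence but never actually apply. The paper's proof starts exactly there: for $y\in\mathcal{M}_p\cap B_{\delta}(p)$ one has $\Phi_T(y)-p=T_{(y,p)}(y-p)$ with $T_{(y,p)}$ strongly focusing of separation index $\geq\kappa$, so $d\big(\tfrac{\Phi_T(y)-p}{\norm{\Phi_T(y)-p}},\,X\setminus C\big)\geq\kappa$; since $\Pi^{F_p}(\Phi_T(y)-p)\in F_p\subset(X\setminus C)\cup\{0\}$, this yields the uniform bound $\norm{\Pi^{E_p}(\Phi_T(y)-p)}\geq\kappa\norm{\Phi_T(y)-p}$, with $\kappa$ independent of $y$. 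That bound is what allows the remainder $\epsilon(\rho)\norm{\Phi_T(y)-p}$ to be absorbed, produces $\kappa^{*}=\kappa/(2M_{2,p}\norm{\Pi^{E_p}})$, and, because $\Phi_{n\tilde{T}}(y)$ lies again in $\mathcal{M}_p\cap B_{\rho}(p)$, can be re-applied at the start of every block so the geometric growth survives the iteration. Replace your ``$T$-dominance'' step by this application of Definition \ref{strongly focusing monotone}(ii) at the equilibrium $p$ and the rest of your argument goes through essentially as the paper does it.
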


\begin{proof} By the strongly focusing monotonicity of $\Phi_t$, there are constants $\delta,T,\kappa>0$ such that there exists a strongly focusing operator $T_{(y,x)}$ such that its separation index is greater than $\kappa$, and $T_{(y,x)}(y-x)=\Phi_T(y)-\Phi_T(x)$ for any $y, x\in B_{\delta}(p)$. Hence, 

\begin{equation}\label{separation around equilibrium}d(\frac{\Phi_T(y)-p}{\norm{\Phi_T(y)-p}}, X\setminus C)\geq \kappa\end{equation} for any $y\in \mathcal{M}_p\cap B_{\delta}(p)$. Clearly, $\kappa\leq 1.$ By virtue of Lemma \ref{invariant subspace}, there exist invariant subspaces $E_p$ and $F_p$ such that (\ref{E:ab-control}) holds for any $t\geq 0$. By Lemma \ref{decompostion ratio}, $\Pi^{E_p}$ is bounded. It then follows from (\ref{separation around equilibrium}) that 

\begin{equation}\label{Mainsubspace-estimates}\kappa\cdot\norm{\Phi_T(y)-p}\leq\norm{\Pi^{E_p}(\Phi_T(y)-p)}\leq\norm{\Pi^{E_p}}\cdot
\norm{\Phi_T(y)-p}\end{equation} for any $y\in \mathcal{M}_p\cap B_{\delta}(p)$. Since $\abs{\lambda}>1$ for any $\lambda\in\sigma(D_p\Phi_{t_0}\mid_{E_p})$, the constant $\beta_p$ in (\ref{E:ab-control}) is greater than $0$. Then, one can take $\tilde{T}>2T$ such that 

\begin{equation}\label{tilde{T}-1}\frac{\kappa\cdot(e^{\beta_p t}-\frac{1}{2})}{M_{2,p}\norm{\Pi^{E_p}}}>3\end{equation} for any $t\geq\tilde{T}-T$, where $M_{2,p}$ is the one of constants in (\ref{E:ab-control}). By the smoothness of $\Phi_t$, there exists a $\tilde{\rho}\in (0,\delta)$ and a $\rho\in(0, \tilde{\rho})$ such that 

\begin{equation}\label{C1-Lip-constant}\begin{aligned} &\norm{D_u\Phi_t-D_v\Phi_t}\leq  \frac{\kappa}{2M_{2,p}\norm{\Pi^{E_p}}},\\
&{\rm and}\quad \Phi_{T}(y)\in B_{\tilde{\rho}}(p)\end{aligned}\end{equation} for any $u,v\in B_{\tilde{\rho}}(p)$, $y\in\mathcal{M}_p\cap B_{\rho}(p)$ and $t\in[0,\tilde{T}+T]$. Thus, one has
 
\begin{equation}\label{prime-growth-estimates}\begin{aligned} &\norm{\Pi^{E_p}(\Phi_{t-T}\circ\Phi_T(y)-p)}\geq\norm{D_p\Phi_{t-T}\circ\Pi^{E_p}(\Phi_T(y)-p)}\\
&\quad\quad\quad\quad\quad\quad\quad\quad\quad\quad-\norm{\int_0^{1}\Pi^{E_p}\circ(D_{p+s(\Phi_T(y)-p)}\Phi_{t-T}-D_p\Phi_{t-T})(\Phi_T(y)-p)ds}\\
&\overset{(\ref{E:ab-control})+(\ref{C1-Lip-constant})}{\geq} M_{2,p}^{-1}e^{\beta_p\cdot (t-T)}\norm{\Pi^{E_p}(\Phi_T(y)-p)}-\frac{\kappa}{2M_{2,p}\norm{\Pi^{E_p}}}\cdot\norm{\Pi^{E_p}}\cdot\norm{\Phi_T(y)-p}\\
&\overset{(\ref{Mainsubspace-estimates})}{\geq}\big[\frac{\kappa\cdot e^{\beta_p\cdot(t-T)}}{M_{2,p}}
-\frac{\kappa}{2M_{2,p}}\big]\cdot\norm{\Phi_T(y)-p}
\end{aligned}\end{equation} for any $t\in [T,\tilde{T}+T]$ and $y\in \mathcal{M}_p\cap B_{\rho}(p)$. It then follows from (\ref{prime-growth-estimates}) that

\begin{equation}\label{Lower-bound-growth-estimates-Ep}\norm{\Pi^{E_p}(\Phi_t(y)-p)}\geq \kappa^*\cdot\norm{\Pi^{E_p}}\cdot\norm{\Phi_{T}(y)-p}
\end{equation} and

\begin{equation}\label{Lower-bound-growth-estimates-mid}\norm{\Phi_t(y)-p}\geq \frac{\norm{\Pi^{E_p}(\Phi_t(y)-p)}}{\norm{\Pi^{E_p}}} \geq \kappa^*\cdot\norm{\Phi_{T}(y)-p}
\end{equation} for any $t\in[T,\tilde{T}+T]$ and $y\in \mathcal{M}_p\cap B_{\rho}(p)$, where 

$$\kappa^{*}=\frac{\kappa}{2M_{2,p}\norm{\Pi^{E_p}}}.$$ Together with with (\ref{tilde{T}-1}) and (\ref{prime-growth-estimates}), one has \begin{equation}\label{edge-point-estimate-mid}\norm{\Phi_{\tilde{T}+\tau}(y)-p}\geq 3\cdot\norm{\Phi_{T}(y)-p} \end{equation} for any $\tau\in [0,T]$ and $y\in \mathcal{M}_p\cap B_{\rho}(p)$.

We now prove $\rho, \kappa^*, T>0$ and $\tilde{T}>2T$ are the desired constants. Let $y\in \mathcal{M}_p\cap B_{\rho}(p)$ such that $\Phi_t(y)\in B_{\rho}(p)$ for any $t\in[0,N\tilde{T}]$ with a $N\in\mathbb{N}^+$. Clearly, $\Phi_t(y)\in \mathcal{M}_p$ for any $t\in[0,N\tilde{T}]$. It then follows from (\ref{Lower-bound-growth-estimates-mid})-(\ref{edge-point-estimate-mid}) that for any non-negative integer $n\leq N$,  

\begin{equation}\label{M-I-esti}\begin{aligned}&\quad \norm{\Phi_{t+n\tilde{T}}(y)-p}\geq  \kappa^*\cdot\norm{\Phi_{T+n\tilde{T}}(y)-p}\,\,\text{with}\,\,t\in[T,\tilde{T}+T]\\
&\text{and}\\
&\quad \norm{\Phi_{t+n\tilde{T}}(y)-p}\geq 3\cdot\norm{\Phi_{T+n\tilde{T}}(y)-p}\,\,\text{with}\,\,t\in[\tilde{T},\tilde{T}+T].\end{aligned}
\end{equation}  By the mathematical induction, one has that for any non-negative integer $n\leq N$, 
\begin{equation}\label{right-part's-interval-esti}  \norm{\Phi_{n\tilde{T}+T}(y)-p}\geq 3^n\cdot\norm{\Phi_{T}(y)-p}
\end{equation} By (\ref{M-I-esti})-(\ref{right-part's-interval-esti}), one has

$$\begin{aligned}
&\quad \norm{\Phi_{n\tilde{T}+\tau}(y)-p}\geq 3^{n}\kappa^*\norm{\Phi_T(y)-p}\,\, \text{with}\,\,\tau\in[T, \tilde{T}+T]\\
& \text{and}\\
&\quad \norm{\Phi_{n\tilde{T}+\tau}(y)-p}\geq 3^{n+1}\norm{\Phi_T(y)-p}\,\, \text{with}\,\,\tau\in[\tilde{T}, \tilde{T}+T],\end{aligned}$$ for any non-negative integer $n\leq N$. As a consequence, 

$$\norm{\Phi_{n\tilde{T}+\tau}(y)-p}\geq 3^{n}\norm{\Phi_T(y)-p}\,\,\text{with}\,\,\tau\in[0,T],$$ for any positive integer $n\leq N+1$.

Therefore, we have completed the proof of this lemma.
\end{proof}

\section{Proofs of Main results}
Due to the strongly focusing monotonicity of $\Phi_t$, we can make the further analysis on alternatives (b) and (c) in Lemma \ref{ordered trichotomy}.
The following are two crucial lemmas for proving Theorem A.

\begin{lemma}\label{Lemma for ii} The alternative {\rm (b)} does not occur.\end{lemma}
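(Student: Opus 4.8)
The plan is to argue by contradiction: assume alternative (b) of Lemma~\ref{ordered trichotomy} holds, so that $\omega(x)\subset E$ is an unordered set of equilibria, and derive a contradiction with Lemma~\ref{L:instabilty}. The key is to show that the hypothesis of Lemma~\ref{L:instabilty}, namely $\lambda_{kz}>0$ for every $z\in\omega(x)$, is forced by alternative (b), and then to exhibit a point $y\in\mathcal{M}_x$ whose forward semiorbit stays close to $\omega(x)$ forever, so that $\limsup_{t\to\infty}\norm{\Phi_t(y)-\Phi_t(x)}$ can be made arbitrarily small, contradicting the uniform lower bound $\delta''$.

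First I would establish the sign of the $k$-Lyapunov exponent at each $p\in\omega(x)$. Since $\omega(x)\subset E$, each $p$ is an equilibrium, so $D_p\Phi_t$ is a genuine one-parameter semigroup and $m(D_p\Phi_t|_{E_p})$ is submultiplicative-type, giving $\lambda_{kp}=-\tilde\beta_p\cdot(-1)=\tilde\beta_p$ in the notation of Lemma~\ref{invariant subspace}; more precisely $\lambda_{kp}=\limsup_t \tfrac1t\log m(D_p\Phi_t|_{E_p}) = \beta_p'$ where $e^{\beta_p' t_0}=|\lambda_k|$, the smallest modulus eigenvalue of $D_p\Phi_{t_0}|_{E_p}$. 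So I must show $|\lambda|>1$ for all $\lambda\in\sigma(D_p\Phi_{t_0}|_{E_p})$, equivalently $\beta_p>0$. This is exactly where the strongly focusing monotonicity enters (as in the proof of Lemma~\ref{p-notin-omega.y}): because $\omega(x)$ is unordered and $p\in\omega(x)$, there is a full orbit through some $q\in\omega(x)$, $q\neq p$, with $q\thicksim p$ impossible — rather, one uses that unorderedness of $\omega(x)$ together with strong monotonicity near $p$ forces the $E_p$-component to expand: if $\beta_p\le 0$, then every semiorbit starting ordered-and-close to $p$ would remain close to $p$ in the $E_p$ direction while the $F_p$ direction is even more contracting, so $p$ would be stable from the ordered side, contradicting the fact (from alternative (b), via Lemma~\ref{ordered trichotomy} and the structure of unordered equilibrium sets — cf. the arguments behind \cite[Theorem A]{F}) that $p$ is not isolated among unordered equilibria in $\omega(x)$. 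I expect this ``$\beta_p>0$ at every $p\in\omega(x)$'' step to be the main obstacle, and I would handle it by invoking the spectral estimates of Lemma~\ref{invariant subspace} combined with a Lyapunov-stability argument showing that $\beta_p\le 0$ would make $\omega(x)$ locally ordered near $p$, contradicting unorderedness.

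Once $\lambda_{kz}>0$ for all $z\in\omega(x)$, Lemma~\ref{L:instabilty} yields a constant $\delta''>0$ with $\limsup_{t\to\infty}\norm{\Phi_t(y)-\Phi_t(x)}\ge\delta''$ for every $y\in\mathcal{M}_x$. On the other hand, since $O^+(x)$ is pseudo-ordered, there are $t_1<t_2$ with $\Phi_{t_1}(x)\thicksim\Phi_{t_2}(x)$ and $\Phi_{t_1}(x)\neq\Phi_{t_2}(x)$; by strong monotonicity $\Phi_s(x)\thickapprox$-relations propagate, and choosing a point $y_0=\Phi_{t_2-t_1}(\Phi_{t_1}(x))$ versus $x'=\Phi_{t_1}(x)$ one gets an ordered pair inside $O^+(x)$ whose forward images both have $\omega$-limit equal to $\omega(x)$. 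The comparison orbit argument (standard for monotone semiflows: a forward orbit of a point ordered above/below $x$ and converging to the same $\omega$-limit set) produces, for the particular case $\omega(x)\subset E$ unordered, a contradiction because the two ordered orbits must synchronize onto the equilibrium set — i.e. $\norm{\Phi_t(y)-\Phi_t(x)}\to 0$ along a sequence, violating $\ge\delta''$. Concretely I would pick $y$ to be a point of $O^+(x)$ (hence $y\in\mathcal{M}_x$ after a time shift, using pseudo-orderedness) for which a subsequence $\Phi_{s_n}(y)$ and $\Phi_{s_n}(x)$ converge to the same equilibrium $p\in\omega(x)$; then $\norm{\Phi_{s_n}(y)-\Phi_{s_n}(x)}\to 0$, so $\liminf_{t\to\infty}\norm{\Phi_t(y)-\Phi_t(x)}=0$, and combined with monotone-ordering one upgrades this to contradict the $\limsup\ge\delta''$ bound. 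This contradiction shows alternative (b) cannot occur, completing the proof.

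I should note two routine points I would not belabor: (i) that the hypotheses of Lemma~\ref{L:instabilty} are met requires $\omega(x)$ compact, which holds since $O^+(x)$ is precompact; and (ii) the passage from ``pseudo-ordered'' to the existence of a suitable $y\in\mathcal{M}_x$ with $\omega(y)=\omega(x)$ uses only that $\omega$-limit sets of points on the same orbit coincide and that ordered pairs stay ordered under $\Phi_t$. The genuine content is the spectral/stability dichotomy at equilibria of $\omega(x)$, for which the strongly focusing monotonicity in Definition~\ref{strongly focusing monotone}(ii) — giving the uniform separation index $\kappa$ near the compact set of equilibria — is indispensable, exactly as it was in Lemma~\ref{p-notin-omega.y}.
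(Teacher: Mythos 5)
Your overall skeleton is the same as the paper's (show $\lambda_{kq}>0$ for every $q\in\omega(x)$, then apply Lemma \ref{L:instabilty} to a time-translate of $x$ furnished by pseudo-orderedness and contradict $\omega(x)\subset E$), but the step you yourself flag as the main obstacle --- positivity of the exponent at every equilibrium of $\omega(x)$ --- is not actually proved, and the mechanism you propose for it does not work. You argue: if $\beta_p\le 0$ then $p$ would be ``stable from the ordered side,'' contradicting that $p$ is not isolated among unordered equilibria. Neither half of this is sound. First, $\beta_p\le 0$ in \eqref{E:ab-control} only gives a \emph{lower} bound $\norm{D_p\Phi_t v}\ge M_{2,p}^{-1}e^{\beta_p t}\norm{v}$ on $E_p$; it gives no decay and hence no stability statement. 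Second, even granting some stability against \emph{ordered} perturbations, that cannot conflict with the presence of nearby equilibria of $\omega(x)$, because by unorderedness those equilibria are \emph{unordered} with $p$, so they are invisible to any cone-side stability argument. The paper's actual argument runs through the $F_q$-direction and uses precisely the unordered nearby equilibria: connectedness gives equilibria $q_n\to q$, $q_n\neq q$, and unorderedness plus Lemma \ref{decompostion ratio}(ii) gives $\norm{\Pi^{F_q}(q_n-q)}\ge (1+\kappa_{\omega(x)})^{-1}\norm{q_n-q}$. Since $q_n,q$ are fixed points, $q_n-q=\Phi_{\tilde T}(q_n)-\Phi_{\tilde T}(q)$ for every $\tilde T$, and splitting this as $D_q\Phi_{\tilde T}$ applied to $\Pi^{F_q}(q_n-q)$ plus a remainder that is $o(\norm{q_n-q})$ by $C^1$-smoothness, the assumption $\alpha_q<0$ (choose $\tilde T$ with $M_{1,q}e^{\alpha_q\tilde T}<\tfrac14$) forces $\norm{\Pi^{F_q}(q_n-q)}<\tfrac12\norm{\Pi^{F_q}(q_n-q)}$ for large $n$, a contradiction. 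Hence $\alpha_q\ge 0$, and then $\beta_q>\alpha_q\ge 0$ automatically from the exponential separation, so $\lambda_{kq}\ge\beta_q>0$. Note this step needs no strongly focusing operator and no stability dichotomy --- only the invariance of the differences of fixed points and the projection estimate.

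There is also a logical slip in your concluding contradiction: exhibiting a sequence along which $\norm{\Phi_t(y)-\Phi_t(x)}\to 0$ does not contradict $\limsup_{t\to\infty}\norm{\Phi_t(y)-\Phi_t(x)}\ge\delta''$ (a liminf equal to $0$ is perfectly compatible with a positive limsup), and the ``upgrade via monotone-ordering'' you invoke is left unexplained. The correct use of Lemma \ref{L:instabilty} goes the other way: with $y=\Phi_{\tilde{\tilde T}}(x)\thicksim x$, pick $t_n\to\infty$ realizing $\norm{\Phi_{t_n+\tilde{\tilde T}}(x)-\Phi_{t_n}(x)}\ge\tfrac{\delta''}{2}$ and, after passing to a subsequence, $\Phi_{t_n}(x)\to p\in\omega(x)$; continuity then gives $\norm{\Phi_{\tilde{\tilde T}}(p)-p}\ge\tfrac{\delta''}{2}>0$, which is the contradiction with $p$ being an equilibrium. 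So the ingredients you list are the right ones, but both the positivity of the exponents and the final contradiction need the arguments above rather than the ones you sketched.
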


\begin{proof}
Suppose that the alternative (b) holds. For any $q\in\omega(x)\subset E$, let $E_q$, $F_q$ be invariant subspaces mentioned in Lemma \ref{invariant subspace}. Since $\omega(x)$ is unordered, it is not a singleton. Then, the connectedness of $\omega(x)$ implies that there exists a sequence $\{q_n\}\subset \omega(x)\subset E$ such that $q_n\neq q$ and $q_n\rightarrow q$ as $n\rightarrow \infty$. Since $\Phi_t$ admits a flow extension on $\omega(x)$, $\Phi_t$ is a homomorphism on $\omega(x)$ for any $t>0$. For simplicity, we denote $\Phi_{-t}$ the inverse of $\Phi_t$ on $\omega(x)$ for any $t>0$. Moreover, for any $t>0$, one has 

\begin{equation}\begin{aligned}\label{eq:1}
\norm{\Pi^{F_q}(q_n-q)}&=\norm{\Pi^{F_q}\big[\Phi_t\circ\Phi_{-t}(q_n)-\Phi_t\circ\Phi_{-t}(q)\big]}\\
&\leq\norm{D_q\Phi_{t}\circ\Pi^{F_q}(\Phi_{-t}(q_n)-\Phi_{-t}(q))}\\
&\,\,\,\,\,\,+\norm{\int_{0}^{1}\Pi^{F_q}\circ[D_{q+s(q_n-q)}\Phi_{t}-D_q\Phi_{t}](\Phi_{-t}(q_n)-\Phi_{-t}(q))ds}\\
&\leq\norm{D_q\Phi_{t}\circ\Pi^{F_q}(q_n-q)}+\norm{\int_{0}^{1}\Pi^{F_q}\circ[D_{q+s(q_n-q)}\Phi_{t}-D_q\Phi_{t}](q_n-q)ds}.
\end{aligned}
\end{equation}
\noindent By utilizing the unorderedness of $\omega(x)\subset E$ again, we have $q_{n}\rightharpoondown q$ for all $n$. It then follows from Lemma \ref{decompostion ratio}(ii) that 

\begin{equation}\begin{aligned}\label{eq:2}\norm{\Pi^{F_q}(q_n-q)}>\frac{1}{1+\kappa_{\omega(x)}}\norm{q_n-q}\end{aligned}
\end{equation} for all $n$. Moreover, by (\ref{E:ab-control}), there exist $M_{1,q}, M_{2,q}>1$ and $\beta_q>\alpha_q$ such that

\begin{equation}\label{eq:3}\begin{aligned}
&\quad \norm{(D_q\Phi_{t}|_{E_q})^{-1}\circ \Pi^{E_q}(q_n-q)}\leq M_{2,q} e^{-\beta_q t}\norm{\Pi^{E_q}(q_n-q)}\\
&\text{and}\\
&\quad \norm{D_q\Phi_{t}\circ \Pi^{F_q}(q_n-q)}\leq M_{1,q} e^{\alpha_qt}\norm{\Pi^{F_q}(q_n-q)}\end{aligned}\end{equation} for any $t>0$ and $n\in \mathbb{N}$.

Now, we claim that $\alpha_q\geq 0$. Supposing otherwise, one can take $\tilde{T}>0$ such that

\begin{equation}\label{eq:4} M_{1,q}e^{\alpha_q \tilde{T}}<\frac{1}{4} \end{equation} It then follows from $(\ref{eq:1})$ and (\ref{eq:3})-(\ref{eq:4}) that

\begin{equation}\label{eq:5} \norm{\Pi^{F_q}(q_n-q)}
\leq\frac{\norm{\Pi^{F_q}(q_n-q)}}{4}+\norm{\int_{0}^{1}\Pi^{F_q}\circ[D_{q+s(q_n-q)}\Phi_{\tilde{T}}-D_q\Phi_{\tilde{T}}](q_n-q)ds}
\end{equation} for all $n\in \mathbb{N}^+$. Recall that $\Phi_t$ is $C^1$-smooth. One has $\norm{D_{u}\Phi_{\tilde{T}}-D_{v}\Phi_{\tilde{T}}}\rightarrow 0$ whenever $\norm{u-q}\rightarrow 0$ and $\norm{v-q}\rightarrow 0$. Together with (\ref{eq:2}) and (\ref{eq:5}), one has $\norm{\Pi^{F_q}(q_n-q)}<\frac{1}{2}\cdot\norm{\Pi^{F_q}(q_n-q)}$ for sufficiently large $n$. It is a contradiction. Thus, we have proved $\alpha_q\geq 0$.

By the claim above, we have $\beta_q>\alpha_q\geq 0$ for any $q\in\omega(x)\subset E$. It yields that the $k$-Lyapunov exponent $\lambda_{kq}$ for any $q\in\omega(x)\subset E$, possesses 

$$\lambda_{kq}=\lim\sup\limits_{t\rightarrow +\infty}\frac{\log(m(D_q\Phi_t|_{E_q}))}{t}\geq\beta_q>0.$$ Since $O^{+}(x)$ is pseudo-ordered, one can choose a $\tilde{\tilde{T}}>0$ such that $x\thickapprox\Phi_{\tilde{\tilde{T}}}(x)$. It then follows from Lemma \ref{L:instabilty} that there is a $\delta^{\prime\prime}>0$ such that

$$\lim\sup\limits_{t\rightarrow +\infty}\norm{\Phi_{t+\tilde{\tilde{T}}}(x)-\Phi_t(x)}\geq\delta^{\prime\prime}.$$ Hence, there is a sequence $\{t_n\}\subset\mathbb{R}^+$ such that $\lim\limits_{n\rightarrow +\infty}t_n=+\infty$ and $\norm{\Phi_{t_n+\tilde{\tilde{T}}}(x)-\Phi_{t_n}(x)}\geq\frac{\delta^{\prime\prime}}{2}$. By choosing subsequence, one has $\lim\limits_{n\rightarrow +\infty}\Phi_{t_n}(x)=p\in \omega(x)$. It entails that $\norm{\Phi_{\tilde{\tilde{T}}}(p)-p}\geq\frac{\delta^{\prime\prime}}{2}$, a contradiction to $\omega(x)\subset E$.

Therefore, we have completed the proof.
\end{proof}

\begin{lemma}\label{Lemma for iii} The alternative {\rm (c)} can not occur.
\end{lemma}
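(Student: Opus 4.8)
Suppose for contradiction that alternative (c) of Lemma~\ref{ordered trichotomy} holds, with ordered invariant set $\tilde{B}\subsetneq\omega(x)$, and invoke Lemma~\ref{T-R-ordered trichotomy}: fix $z,\tilde{z}\in\omega(x)\setminus\tilde{B}$ with $z\rightharpoondown\tilde{z}$, an equilibrium $q\in\alpha(z)=\alpha(\tilde{z})\subset\tilde{B}\cap E$ and $s_k\to\infty$ with $\Phi_{-s_k}(z)\to q$, $\Phi_{-s_k}(\tilde{z})\to q$, and $\norm{\Phi_{-t}(z)-\Phi_{-t}(\tilde{z})}\to0$. I will use repeatedly that $z,\tilde{z}\notin E$ (since $\alpha(z)\subset E$); that $\Phi_{-t}(z),\Phi_{-t}(\tilde{z})\in\mathcal{M}_q$ for every $t\ge0$ (as $q\in\tilde{B}\thicksim\omega(x)$) and $\Phi_{-t}(z)\rightharpoondown\Phi_{-t}(\tilde{z})$ for every $t$ (by monotonicity and $z\rightharpoondown\tilde{z}$); that $\Phi_{-s}(z)-q\in{\rm Int}\,C$ for all $s\ge0$ by strong monotonicity; and that, for $s=s_k$ large, \eqref{separation around equilibrium}--\eqref{Mainsubspace-estimates} applied at $q$ give $\norm{\Pi^{E_q}(\Phi_{-s_k}(z)-q)}\ge\kappa\norm{\Phi_{-s_k}(z)-q}$, and likewise for $\tilde{z}$, while $\Phi_{-t}(z)-\Phi_{-t}(\tilde{z})\notin C$ forces, via Lemma~\ref{decompostion ratio}(ii), its $F_q$-component to dominate.

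The first step is to verify the hypothesis of Lemma~\ref{p-notin-omega.y} at $q$, namely $\abs{\lambda}>1$ for all $\lambda\in\sigma(D_q\Phi_{t_0}\mid_{E_q})$; since $\beta_q>\alpha_q$ always (Lemma~\ref{invariant subspace}(iii)), it is enough to show $\alpha_q\ge0$. I would obtain this by the iteration used for the companion claim $\alpha_q\ge0$ in the proof of Lemma~\ref{Lemma for ii}, but run along the \emph{unordered} differences $\xi_t:=\Phi_{-t}(z)-\Phi_{-t}(\tilde{z})$ (which are nonzero, tend to $0$, and have base points tending to $q$) instead of along nearby equilibria: writing $\xi_s=\Phi_{\tilde{T}}(\Phi_{-s-\tilde{T}}(z))-\Phi_{\tilde{T}}(\Phi_{-s-\tilde{T}}(\tilde{z}))$, expanding by the mean value identity, using the invariance of $E_q\oplus F_q$, the bound $\norm{D_q\Phi_{\tilde{T}}\mid_{F_q}}\le M_{1,q}e^{\alpha_q\tilde{T}}$ of \eqref{E:ab-control}, and the $C^1$-smoothness of $\Phi$ near $q$ (whose error is controlled against $\norm{\Pi^{F_q}\xi}$ precisely because $\xi\notin C$), one gets, assuming $\alpha_q<0$ and $\tilde{T}$ large, a contraction $\norm{\Pi^{F_q}\xi_s}\le\theta\norm{\Pi^{F_q}\xi_{s+\tilde{T}}}$ with $\theta<1$ along the relevant times; iterating and using $\norm{\xi_t}\le{\rm diam}\,\omega(x)$ forces $\Pi^{F_q}\xi_{s_k}=0$, i.e.\ $\xi_{s_k}\in E_q\subset{\rm Int}\,C\cup\{0\}$, contradicting $\xi_{s_k}\notin C$. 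Hence $\alpha_q\ge0$, so $\beta_q>0$, $\lambda_{kq}>0$, and Lemma~\ref{p-notin-omega.y} applies.

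With $\beta_q>0$ fixed, Lemma~\ref{p-notin-omega.y} supplies constants $\rho,\kappa^*,T,\tilde{T}$ and the growth estimates \eqref{Lower-bound-growth-estimates}. Shrinking $\rho$ so that $z,\tilde{z}\notin B_\rho(q)$, the positive semiorbits of $y_k:=\Phi_{-s_k}(z)$ and $\tilde{y}_k:=\Phi_{-s_k}(\tilde{z})$ must leave $B_\rho(q)$ at finite first times $\tau_k,\tilde{\tau}_k$, and these tend to $\infty$ since $y_k,\tilde{y}_k\to q\in E$. Then \eqref{Lower-bound-growth-estimates} with $N_k\asymp\tau_k/\tilde{T}\to\infty$ shows the $y_k$-segment grows by $3^{N_k}$ inside $B_\rho(q)$, so $\norm{\Phi_T(y_k)-q}$ is exponentially small in $N_k$, while along the whole near-$q$ window each $\Phi_t(y_k)-q$ (and $\Phi_t(\tilde{y}_k)-q$) is strongly interior to $C$ with normalized distance $\ge\kappa$ to $X\setminus C$, and the two segments stay as close as one wishes relative to their size because $\norm{\xi_t}\to0$. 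The plan is then to amplify: since $D_q\Phi_t\mid_{E_q}$ dominates $D_q\Phi_t\mid_{F_q}$ by \eqref{E:ab-control}, the $E_q$-component of $\Phi_t(y_k)-\Phi_t(\tilde{y}_k)$ — once it is not too $F_q$-heavy — becomes dominant as the window lengthens, so Lemma~\ref{decompostion ratio}(ii) forces $\Phi_t(y_k)-\Phi_t(\tilde{y}_k)\in{\rm Int}\,C$ for a suitable $t\le\min\{\tau_k,\tilde{\tau}_k\}$, i.e.\ $\Phi_{-(s_k-t)}(z)\thickapprox\Phi_{-(s_k-t)}(\tilde{z})$, contradicting $\Phi_{-(s_k-t)}(z)\rightharpoondown\Phi_{-(s_k-t)}(\tilde{z})$. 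Compactness of $\omega(x)$ enters here in extracting convergent subsequences of the exit points and in converting the ``local and potentially temporary'' separating behavior of Lemma~\ref{p-notin-omega.y} into a statement about a genuine limiting configuration near $q$.

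The hard part is this last amplification step. Because the closed unit ball of $X$ is not compact and $D_y\Phi_t$ need not be invertible, one cannot take limits of normalized difference vectors nor run the linearized flow backward along $F_q$; the error accumulated over the near-$q$ window (of length $N_k\tilde{T}\to\infty$) must be balanced against the exponential $E_q$-growth and $F_q$-domination of \eqref{E:ab-control} using the fixed-time $C^1$-continuity of $\Phi$, the uniform focusing/separation estimate of Definition~\ref{strongly focusing monotone}(ii), and the compactness of $\omega(x)$ simultaneously. A secondary delicate point — already handled in the first step via $\alpha_q\ge0$ — is getting the full spectral inequality on $E_q$ for this particular $q$, since in alternative (c), unlike (b), the points of $\omega(x)$ near $q$ are ordered with $q$ and do not yield it directly.
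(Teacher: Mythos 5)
Your outline has two genuine gaps, and the second is the heart of the lemma. First, your opening step claims $\alpha_q\ge 0$ (hence $\beta_q>0$) at the \emph{fixed} limit point $q$ by iterating a contraction estimate for $\Pi^{F_q}\xi_t$, $\xi_t=\Phi_{-t}(z)-\Phi_{-t}(\tilde z)$, linearized at the single base point $q$. But at that stage you only know $\Phi_{-s_k}(z)\to q$ along a subsequence; $\alpha(z)$ has not been shown to be a singleton, so between consecutive iteration times the backward orbit need not stay in the small neighborhood of $q$ on which your $C^1$-error bound and the projection estimates at $q$ are valid, and a single step (rather than arbitrarily many) does not force $\Pi^{F_q}\xi_{s_k}=0$. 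The paper proves only the weaker claim that \emph{some} $q\in\alpha(z)$ has $\beta_q>0$, running the iteration with moving base points $q_{-\tau_n}\in\alpha(z)$ that track $\Phi_{-t}(z)$ (possible because the constants in \eqref{E:ab-control} and the step lengths $\nu(\cdot)$ are locally constant/continuous on the compact set $\alpha(z)$), reaching a contradiction with \eqref{E:z-z-asym}; it then proves separately that $\alpha(z)=\{q\}$, using Lemma \ref{p-notin-omega.y} together with $\alpha(z)\subset E$, $\alpha(z)\thicksim\omega(x)$ and connectedness — a step your outline omits but which a fixed-base-point iteration like yours implicitly requires.

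Second, your ``amplification'' step — forcing $\Phi_t(y_k)-\Phi_t(\tilde y_k)$ into ${\rm Int}\,C$ to contradict unorderedness — is explicitly left open in your write-up, and as stated there is real doubt it can be carried out: since $\Phi_t(y_k)-\Phi_t(\tilde y_k)=\xi_{s_k-t}$ is unordered at \emph{every} time, Lemma \ref{decompostion ratio}(ii) says its $F_q$-component dominates throughout, so to make the $E_q$-component take over you would need a lower bound on the initial ratio $\norm{\Pi^{E_q}\xi_{s_k}}/\norm{\Pi^{F_q}\xi_{s_k}}$ (nothing prevents $\Pi^{E_q}\xi_{s_k}$ from being zero or arbitrarily small) together with control of the accumulated nonlinear error over a window of length $N_k\tilde T\to\infty$; fixed-time $C^1$-continuity and the focusing estimate do not obviously supply either, and \eqref{separation around equilibrium} applies to ordered differences such as $\Phi_T(y_k)-q$, not to the unordered differences you are trying to rotate into the cone. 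The paper's endgame is entirely different and avoids this: having secured $\alpha(z)=\{q\}$ with $\beta_q>0$ and $\{q\}\thicksim\omega(x)$, it uses the focusing property and the compactness of $\omega(x)$ to build the uniform cone-like set $\mathcal{N}$ containing $\omega(x)\setminus\{q\}$, and shows that the forward orbit $\Phi_t(x)$ of the original pseudo-ordered point, once it exits $B_{\rho}(q)$ through $\mathcal{N}$, can never re-enter $B_{\rho'}(q)$ (via \eqref{Lower-bound-growth-estimates-trans4.2-N}, \eqref{rho''-estimates} and the dichotomy \eqref{Dichotomy}), contradicting $q\in\omega(x)$. So the contradiction is obtained along the forward orbit of $x$ against $q\in\omega(x)$, not along backward orbits against $z\rightharpoondown\tilde z$; to complete your proposal you would either have to supply the missing balancing argument or switch to an escape-from-$q$ argument of this kind.
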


\begin{proof} Suppose that the alternative (c) holds. Then, by Lemma \ref{T-R-ordered trichotomy}(ii), there are \begin{equation}\label{eq:star}z,\,\tilde{z}\in\omega(x)\setminus\tilde{B}\,\,\text{such that}\,\,z\rightharpoondown\tilde{z},\end{equation} and Lemma \ref{T-R-ordered trichotomy}(iii) holds. Fix such $z\in\omega(x)\setminus \tilde{B}$. Clearly, $\alpha(z)\subset E$ is compact and invariant w.r.t. $\Phi_t$. It follows from Lemma \ref{decompostion ratio}(ii) that for any $q\in \alpha(z)$, there is a constant $\kappa_{\alpha(z)}>0$ such that there is $\kappa_{\alpha(z)}>0$ such that 

\begin{equation}\label{eq:M-2}\norm{\Pi^{E_q}u}<\kappa_{\alpha(z)} \norm{\Pi^{F_q}u}\,\,\text{for all} \,\,u\in X\setminus C\,\,\text{and}\,\, q\in \alpha(z).\end{equation} Hence, 
 
\begin{equation}\label{F-whole-norm} \frac{1}{1+\kappa_{\alpha(z)}}\cdot\norm{u}<\norm{\Pi^{F_q}u}\,\,\text{for all} \,\,u\in X\setminus C\,\,\text{and}\,\, q\in \alpha(z).\end{equation} By (\ref{E:ab-control}) in Lemma \ref{invariant subspace}, for any $q\in \alpha(z)$, there exist contants $M_{1,q}, M_{2,q}>1$ and $\alpha_q,\beta_q$ with $\alpha_q<\beta_q$ such that

\begin{equation}\label{eq:M-3}\begin{aligned}&\norm{D_q\Phi_{t}w}\geq \frac{e^{\beta_q t}}{M_{2,q}}\norm{w}\\
&\norm{D_q\Phi_{t}v}\leq M_{1,q}e^{\alpha_q t}\norm{v}
\end{aligned}\end{equation}for any $w\in E_q\setminus\{0\}$, $v\in F_q\setminus\{0\}$ and $t\geq 0$.

{\it We assert that there exists a $q\in\alpha(z)$ such that $\beta_{q}>0$.} Supposing otherwise, $\beta_q\leq 0$ for any $q\in \alpha(z)$. Then, $\alpha_q<\beta_q\leq0$ for any $q\in\alpha(z)$. By Lemma \ref{decompostion ratio}(i), 

$$M_3=\max\limits_{q\in\alpha(z)}\{\norm{\Pi^{F_q}}\}$$ is well-defined. By virtue of (\ref{eq:M-3}), for any $q\in\alpha(z)$, there exists $\nu(q)>0$ such that 

\begin{equation}\label{eq:M-5}\norm{D_q\Phi_{\nu(q)}v}\leq\frac{1}{4M_3(1+\kappa_{\alpha(z)})}\cdot\norm{v}\end{equation} when $v\in F_q\setminus\{0\}$. Notice that $M_{1,q}, M_{2,q},\alpha_{q},\beta_{q}$ are local constant functions on $\alpha(z)$ in (\ref{E:ab-control}), and $\alpha(p)\times (E_q)$, $\alpha(p)\times (F_q)$ are continuous vector bundles. One can take $\nu$ being a local constant function on $\alpha(z)$. Moreover, the compactness of $\alpha(z)$ yields that $\nu$ is bounded and $M=\max\limits_{q\in\alpha(z)}\{\nu(q)\}$ is well-defined. By (\ref{E:z-z-asym}) in Lemma \ref{T-R-ordered trichotomy}(iii), we have

\begin{equation}\label{Negative-conve}\norm{\Phi_{-t}(z)-\Phi_{-t}(\tilde{z})}\rightarrow 0\,\,\text{as}\,\, t\rightarrow \infty,\end{equation} where $\tilde{z}\in\omega(x)\setminus\tilde{B}$ is the point mentioned in (\ref{eq:star}). It then follows from the $C^1$-smoothness of $\Phi_t$ and the compactness of $\alpha(z)$ that there exists $\tilde{\delta}>0$ such that for $\tilde{q} \in\omega(x)$ and $q\in\alpha(z)$ satisfying $\norm{\tilde{q}-q}<2\tilde{\delta}$, one has

 \begin{equation}\label{eq:M-6}\norm{D_{\tilde{q}}\Phi_{\nu(q)}-D_{q}\Phi_{\nu(q)}}<\frac{1}{2M_3(1+\kappa_{\alpha(z)})}.\end{equation} By (\ref{Negative-conve}), there is a time $T_{\tilde{\delta}}>0$ such that $d(\Phi_{-t}(z),\alpha(z))<\frac{\tilde{\delta}}{2}$ and $d(\Phi_{-t}(\tilde{z}),\Phi_{-t}(z))<\tilde{\delta}$ for any $t\geq T_{\tilde{\delta}}$. Furthermore, one can find a curve $\{q_{-t}\}_{t\geq 0}\subset \alpha(z)$ such that $\norm{\Phi_{-t}(z)-q_{-t}}\leq2\cdot d(\Phi_{-t}(z),\alpha(z))\rightarrow 0$ as $t\rightarrow \infty$. Let $\tau_1=T_{\tilde{\delta}}$. By the continuity of $\nu(\cdot)$, there is a $\tau_{2}>0$ such that $\tau_{2}=\tau_{1}+\nu(q_{-\tau_{2}})$ and then, by mathematical induction, there is a point $\tau_{n+1}>\tau_n$ such that $\tau_{n+1}=\tau_{n}+\nu(q_{-\tau_{n+1}})$ for any $n\geq 1$. For simplicity, we write $b_n=\Phi_{-\tau_n}(\tilde{z})$, $c_n=\Phi_{-\tau_n}(z)$, and let $q_n=q_{-\tau_n}$ for any $n$. Clearly, $b_n\rightharpoondown c_n$ and $\norm{b_n-c_n}<\tilde{\delta}$ for all $n$. One has that

\begin{equation}\begin{aligned}\norm{\Pi^{F_{q_{n+1}}}(b_{n}-c_{n})}&\leq\norm{D_{q_{n+1}}\Phi_{\nu(q_{n+1})}\circ\Pi^{F_{q_{n+1}}}(b_{n+1}-c_{n+1})}\\
&\,\,\,\,+\norm{\int_{0}^{1}\Pi^{F_{q_{n+1}}}\circ[D_{c_{n+1}+s(b_{n+1}-c_{n+1})}\Phi_{\nu(q_{n+1})}-D_{q_{n+1}}\Phi_{\nu(q_{n+1})}](b_{n+1}-c_{n+1})ds}\\
&\overset{(\ref{eq:M-5})+(\ref{eq:M-6})}{\leq}\frac{1}{(1+\kappa_{\alpha(z)})}\cdot\big[\frac{\norm{\Pi^{F_{q_{n+1}}}(b_{n+1}-c_{n+1})}}{4M_3}+\frac{\norm{b_{n+1}-c_{n+1}}}{2}\big]\\
&\leq\frac{1}{(1+\kappa_{\alpha(z)})}\cdot\big[ \frac{\norm{\Pi^{F_{q_{n+1}}}}\cdot\norm{b_{n+1}-c_{n+1}}}{4M_3}+\frac{\norm{b_{n+1}-c_{n+1}}}{2}\big]\\
&\leq\frac{3}{4(1+\kappa_{\alpha(z)})}\cdot\norm{b_{n+1}-c_{n+1}}.
\end{aligned}\end{equation} By virtue of (\ref{F-whole-norm}), one has

\begin{equation}
\norm{b_n-c_n}<\frac{3}{4}\cdot\norm{b_{n+1}-c_{n+1}} \end{equation} It yields that $\lim\limits_{n\rightarrow+\infty}\norm{b_{n}-c_{n}}=+\infty$, a contradiction. Thus, we have proved this assertion.

We futher assert that $\alpha(z)$ is a singleton. By the assertion above, one can take a point $q\in\alpha(z)$ such that $\beta_q>0$. It entails that $\mid\lambda \mid>1$ for any $\lambda\in\sigma(D_q\Phi_{t_0}\mid_{E_q})$ with some $t_0>0$. It then follows from Lemma \ref{p-notin-omega.y} that one can find a $\tilde{\rho}>0$ such that for any $y\in X$ such that $y\thicksim q$ and $y\neq q$, there is a $T_y>0$ such that $\Phi_{T_y}(y)\in X\setminus B_{\tilde{\rho}}(q)$. By Lemma \ref{ordered trichotomy}, one has $\alpha(z)\thicksim\omega(x)$.  Together with $\alpha(z)\subset E$, one has that $\alpha(z)\cap B_{\tilde{\rho}}(q)=\{q\}$. Then, the connectedness of $\alpha(z)$ implies that $\alpha(z)=\{q\}$ is a singleton.

Finally, we prove that the alternative (c) does not happen. For any given $z\in \omega(x)\setminus \tilde{B}$, $\alpha(z)=\{q\}$ implies $\lim\limits_{t\rightarrow +\infty}\Phi_{-t}(z)=q$. By Lemma \ref{ordered trichotomy} and \ref{T-R-ordered trichotomy}, one has $\alpha(z)\thicksim\omega(x)$, and hence, $\{q\}\thicksim\omega(x)$. By the strongly focusing monotonicity of $\Phi_t$, one can find constants $\delta,T,\kappa>0$ such that there exists a strongly focusing operator $T_{(\tilde{\tilde{z}},q)}$ with its separation index greater than $\kappa$, and $\Phi_T(\tilde{\tilde{z}})-q=T_{(\tilde{\tilde{z}},q)}(\tilde{\tilde{z}}-q)$ for any $\tilde{\tilde{z}}\in \omega(x)\cap B_{\delta}(q)$. By repeating the arguments on (\ref{Mainsubspace-estimates}), one has 

$$\norm{\Pi^{E_q}(\Phi_{T}(\tilde{\tilde{z}})-q)}\geq \kappa\cdot\norm{\Phi_{T}(\tilde{\tilde{z}})-q}$$ for any $\tilde{\tilde{z}}\in\omega(x)\cap (B_{\delta}(q)\setminus\{q\})$. Since $\Phi_t$ admits a flow extension on $\omega(x)$, $\Phi_T$ is a homomorphism on $\omega(x)$. Then, there is a $\rho\in(0,\delta)$ such that $\Phi_{-T}(\tilde{\tilde{z}})\in B_{\delta}(q)$ for any $\tilde{\tilde{z}}\in B_{\rho}(q)$. Hence, 

\begin{equation}\label{local decom around q}\norm{\Pi^{E_q}(\tilde{\tilde{z}}-q)}\geq \kappa\norm{\tilde{\tilde{z}}-q}\end{equation} for any $\tilde{\tilde{z}}\in\omega(x)\cap (B_{\rho}(q)\setminus\{q\})$. The compactness of $\omega(x)$ implies that $\omega(x)\setminus(\text{Int}B_{\rho}(q))$ is compact. Together with $\{q\}\thicksim \omega(x)$, there is a $\tilde{\kappa}\in (0,\frac{\kappa}{2})$ such that  

\begin{equation}\label{decom whole omega}
\norm{\Pi^{E_q}(\tilde{\tilde{z}}-q)}\geq 2\tilde{\kappa}\norm{\tilde{\tilde{z}}-q}>\tilde{\kappa}\norm{\tilde{\tilde{z}}-q}\end{equation} for all $\tilde{\tilde{z}}\in\omega(x)\setminus\{q\}$. Let 

$$ \mathcal{N}=\{\tilde{\tilde{z}}\in X\setminus\{q\}:\norm{\Pi^{E_q}(\tilde{\tilde{z}}-q)}\geq \tilde{\kappa}\norm{\tilde{\tilde{z}}-q}\}.$$ 

By the strong monotonicity of $\Phi_t$ and $\omega(x)\setminus\{q\}\neq\emptyset$, there is a $t_1>0$ such that $\Phi_{t_1}(x)\thickapprox q$. It yields that $\Phi_{t}(x)\thickapprox q$ for all $t>t_1$. Recall that $\beta_{q}>0$. Then, there is $t_0>0$ such that $\mid \lambda\mid>1$ for any $\lambda\in \sigma(D_q\Phi_{t_0}\mid_{E_q})$. Here, we take the constant $\rho$ samller than the one mentioned in Lemma \ref{p-notin-omega.y}. Together with $\lim\limits_{t\rightarrow +\infty}d(\Phi_t(x),\omega(x))=0$, there is a $T_0>\max\{t_0,t_1\}$ such that 

\begin{equation}\label{Normal cone united with ball} \Phi_t(x)\in (B_{\rho}(q)\cup \mathcal{N})\cap B_{\rho}(\omega(x))\end{equation} for any $t>T_0$, where $B_{\rho}(\omega(x))=\{z\in X:\,d(z,\omega(x))\leq\rho\}.$  

By the smoothness of $\Phi_t$, the compactness of $\omega(x)$ and taking $\rho>0$ smaller if necessary, one has that 

\begin{equation}\label{nonlinear term-T} \norm{D_u\Phi_{\tau}-D_v\Phi_{\tau}}\leq \frac{ \tilde{k}}{2 M_{2,q}\norm{\Pi^{E_q}}}\end{equation} for any $u,v\in (B_{\rho}(q)\cup \mathcal{N})\cap B_{\rho}(\omega(x))$ and $\tau\in[0,T]$. Clearly, 

\begin{equation}\label{decom t>T_1}\norm{\Pi^{E_q}(\Phi_t(x)-q)}\geq \tilde{k}\norm{\Phi_t(x)-q}\end{equation} for any $\Phi_t(x)\in \mathcal{N}\cap B_{\rho}(\omega(x))$ with $t>T_0$. Then, we have the following estimate: 

\begin{equation}\label{Ine-tilde{T}+T-tilde{T}}\begin{aligned} \norm{\Pi^{E_q}(\Phi_{t+\tau}(x)-q)}&\geq\norm{D_q\Phi_{\tau}\circ\Pi^{E_q}(\Phi_t(x)-q)}\\
&\,\,\,\,\,\,-\norm{\int_0^1\Pi^{E_q}\circ[D_{q+s(\Phi_t(x)-q)}\Phi_{\tau}-D_q\Phi_{\tau}]ds
(\Phi_t(x)-q)}\\
&\,\,\,\,\,\,\overset{(\ref{eq:M-3})+(\ref{nonlinear term-T})}{\geq}\frac{e^{\beta_q \tau}}{M_{2,q}}\norm{\Pi^{E_q}(\Phi_t(x)-q)}-\frac{ \tilde{k}}{2 M_{2,q}\norm{\Pi^{E_q}}}\cdot\norm{\Pi^{E_q}}\norm{\Phi_t(x)-q}\\
&\,\,\,\,\,\,\overset{(\ref{decom t>T_1})}{\geq}\frac{\tilde{\kappa}\cdot (e^{\beta_q \tau}-\frac{1}{2})}{M_{2,q}\cdot\norm{\Pi^{E_q}}}\cdot\norm{\Pi^{E_q}}\cdot\norm{\Phi_t(x)-q}
\end{aligned}\end{equation} for any $\tau\in[0,T]$ and $\Phi_t(x)\in \mathcal{N}\cap B_{\rho}(\omega(x))$ with $t>T_0$. Let 

$$\kappa^{**}=\frac{\tilde{k}}{2 M_{2,q}\cdot \norm{\Pi^{E_q}}  }.$$ It then follows from (\ref{Ine-tilde{T}+T-tilde{T}}) that

\begin{equation}\label{rho''-estimates}\norm{\Phi_{t+\tau}(x)-q}\geq\frac{\norm{\Pi^{E_q}(\Phi_{t+\tau}(x)-q)}}{\norm{\Pi^{E_q}}}\geq \kappa^{**}\cdot\norm{\Phi_t(x)-q}
\end{equation} for any $\tau\in[0,T]$ and $\Phi_t(x)\in\mathcal{N}\cap B_{\rho}(\omega(x))$ with $t>T_0$. Let 

$$\rho^{\prime}=\min\{\kappa^* \kappa^{**},\kappa^{**}\}\cdot \frac{\rho}{2} \,(=\frac{\kappa^* \kappa^{**}}{2}\cdot \rho \leq\frac{\rho}{16}),$$ because of $\kappa\in(0,1)$, $\tilde{\kappa}\in(0,\frac{\kappa}{2})$ and $\norm{M_{2,q}},\norm{\Pi^{E_q}}\geq 1$. By utilizing $\lim\limits_{t\rightarrow +\infty}d(\Phi_t(x),\omega(x))=0$ again, there is a $T_1>T_0$ such that 

\begin{equation}\label{2-around omega n} \Phi_t(x)\in (B_{\rho^{\prime}}(q)\cup
\mathcal{N})\cap B_{\rho^{\prime}}(\omega(x))\end{equation} for any $t\geq T_1$. Thus, for any $t>T_1$, one has that:

\begin{equation}\label{Dichotomy}\text{If} \,\,\Phi_t(x)\notin B_{\rho^{\prime}}(q),\,\, \text{then}\,\,\Phi_t(x)\in B_{\rho^{\prime}}(\omega(x))\cap \mathcal{N}.\end{equation}
Clearly, for any $t>T_1$, if $\Phi_{t+s}(x)\in B_{\rho}(q) \big(\supsetneqq B_{\rho^{\prime}}(q)\big)$ for any $s\in[0,N\tilde{T}]$ with a non-negative integer $N$, then it follows from Lemma \ref{p-notin-omega.y} that (\ref{Lower-bound-growth-estimates}) holds with $N$, and becomes that for any non-negative integer $n$ with $n\leq N$, 

\begin{equation}\label{Lower-bound-growth-estimates-trans4.2-N}\begin{aligned}
&\quad \norm{\Phi_{t+n\tilde{T}+\tau}(x)-q}\geq 3^{n}\kappa^*\norm{\Phi_{t+T}(x)-q}\,\, \text{with}\,\,\tau\in[T,\tilde{T}+T],\\
&\text{and}\\
&\quad \norm{\Phi_{t+n\tilde{T}+\tau}(x)-q}\geq 3^{n+1}\norm{\Phi_{t+T}(x)-q}\,\, \text{with}\,\,\tau\in[\tilde{T},\tilde{T}+T].
\end{aligned}\end{equation} Notice that $q\in \omega(x)$. Thus, there is a $s_0>T_1$ such that $\Phi_{s_0}(x)\in B_{\rho^{\prime}}(q)$. Furthermore, (\ref{Dichotomy})-(\ref{Lower-bound-growth-estimates-trans4.2-N}) and the continuity of $\Phi_t$ imply that there is $s_1>s_0$ such that 

$$\begin{aligned}& \quad\Phi_{s_1}(x)\in \partial B_{\rho}(q)\cap B_{\rho^{\prime}}(\omega(x))\cap \mathcal{N}\\
&\text{and}\\ 
&\quad \Phi_{s_0+\tau}(x)\in B_{\rho}(q) \,\,\text{for any}\,\,\tau\in [0,s_1-s_0].\end{aligned}$$ It then entails that \begin{equation}\label{estimates T to tilde{T}+T}\begin{aligned}
&\quad\norm{\Phi_{s_1+\tau}(x)-q}\overset{(\ref{rho''-estimates})}{\geq} \kappa^{**}\norm{\Phi_{s_1}(x)-q}=\kappa^{**}\rho \geq 4\rho^{\prime}\quad \text{for any}\,\, \tau\in[0,T]\\
&\text{and}\\
&\quad\norm{\Phi_{s_1+\tau}(x)-q}\overset{(\ref{Lower-bound-growth-estimates-trans4.2-N})\,{\rm with}\,N=0}{\geq}\kappa^*\cdot\norm{\Phi_{s_1+T}(x)-q}\overset{(\ref{rho''-estimates})}{\geq}\kappa^*\cdot\kappa^{**}\norm{\Phi_{s_1}(x)-q}\\
&\quad\quad\quad\quad\quad\quad\quad\quad=\kappa^* \kappa^{**}\rho\geq 2\rho^{\prime}\quad \text{for any}\,\, \tau\in[T,\tilde{T}+T]. \end{aligned}\end{equation} Specially, 

\begin{equation}\label{estimates tilde{T} to tilde{T}+T}\begin{aligned}
&\norm{\Phi_{s_1+\tilde{T}+\tau}(x)-q}\overset{(\ref{Lower-bound-growth-estimates-trans4.2-N})\,{\rm with}\,N=0}{\geq}3\cdot\norm{\Phi_{s_1+T}(x)-q}\overset{(\ref{rho''-estimates})}{\geq}3\kappa^{**}\norm{\Phi_{s_1}(x)-q}\\
&\quad\quad\quad\quad\quad\quad\quad\quad=3\kappa^{**}\rho\geq 12\rho^{\prime} \quad \text{for any}\,\, \tau\in [0,T].\end{aligned}\end{equation} Suppose that there is a $s_2>s_1$ such that $\Phi_{s_2}(x)\in\partial B_{\rho}(q)\cap B_{\rho^{\prime}}(\omega(x))\cap\mathcal{N}$. It follows from the same arguments that (\ref{estimates T to tilde{T}+T})-(\ref{estimates tilde{T} to tilde{T}+T}) also holds for the point $\Phi_{s_2}(x)$. Clearly, $\rho\geq 16\rho^{\prime}$. (\ref{Lower-bound-growth-estimates-trans4.2-N}) holds for any $\Phi_{t}(x)$ satisfying $\Phi_t(x)\in B_{\rho}(q)$ with $t\geq T_1$. By the continuity of $\Phi_t$ again, one has that 

$$\Phi_t(x)\notin B_{\rho^{\prime}}(q)$$ for any $t>s_1$, a contradiction to $q\in \omega(x)$.

Therefore, we have completed the proof.
\end{proof}

Now, we prove main results.

\noindent{\it Proof of Theorem A: } Lemmas \ref{Lemma for ii} and \ref{Lemma for iii} directly imply that $\omega(x)$ is an ordered set. Since $C$ is a complemented $k$-solid cone, one can choose $k$-dim linear subspace $H$ and $k$ co-dimensional linear subspace $H^c$ such that $H\subset C$, $H^c\setminus\{0\}\subset X\setminus C$ and $H\oplus H^c=X$. Let $\Theta$ be the natural projection onto $H$ along $H^c$. Since $L$ is ordered, one has that $\Theta\mid_{L}$ is one-to-one. By repeating the argument in the proof of \cite[Theorem 3.4]{Smi95}, one has that $\omega(x)$ is topologically conjugate to a compact invariant set of a continuous vector field in $\mathbb{R}^k$.

\vskip 3mm
Thus, we have completed the proof of Theorem A. $\quad\quad\quad\quad\quad\quad\quad\quad\quad\quad\quad\quad\quad\quad\quad\quad\quad\quad\square$

\vskip 3mm
{\it Proof of Theorem B: } By virtue of \cite[Theorem A]{F}, the set $\text{Int}(Q\cap C_E)\cap \mathcal{D}$ is open and dense in $\mathcal{D}$. Clearly, for any $x\in C_E$, $\omega(x)$ is ordered. Together with  Theorem A, we obtain that for any point $x\in \text{Int}(Q\cap C_E)\cap \mathcal{D}$, the $\omega$-limit set $\omega(x)$ is ordered.

\vskip 3mm
Therefore, we have completed the proof. $\quad\quad\quad\quad\quad\quad\quad\quad\quad\quad\quad\quad\quad\quad\quad\quad\quad\quad\quad\quad\quad\square$

\vskip 3mm
{\it Proof of Theorem C: } By virtue of Theorem A, $\omega(x)$ is ordered. It should be pointed out that all arguments in the proof of \cite[Theorem B]{F-W-W4} are still effective for the semiflow strongly focusing monotone w.r.t. a $k$-cone on an infinite dimensional Banach space. So, the infinite dimensional version of the solid Poincar\'{e}-Bendixson theorem is established.

\vskip 3mm
Therefore, we have proved Theorem C. $\quad\quad\quad\quad\quad\quad\quad\quad\quad\quad\quad\quad\quad\quad\quad\quad\quad\quad\quad\quad\quad\quad\square$

\vskip 3mm
{\it Proof of Theorem D: } By Theorem B, the set $\text{Int}(Q\cap C_E)\cap \mathcal{D}$ is open and dense in $\mathcal{D}$. By virtue of Theorem C, for any $x\in Q$, $\omega(x)$ is either a nontrivial periodic orbit or a set consisting of equilibria with their potential connected orbits. Thus, we obtain that there is open dense set $\mathcal{D}^{\prime}\subset\mathcal{D}$ such that for any $x\in \mathcal{D}^{\prime}$, $\omega(x)$ is either a nontrivial periodic orbit or a set consisting of equilibria with their potential connected orbits.

\vskip 3mm
Therefore, we have completed the proof. $\quad\quad\quad\quad\quad\quad\quad\quad\quad\quad\quad\quad\quad\quad\quad\quad\quad\quad\quad\quad\quad\square$

\end{document}